\numberwithin{equation}{section}
\theoremstyle{plain} 
\newtheorem{theorem}[equation]{Theorem}
\newtheorem{lemma}[equation]{Lemma}
\newtheorem{proposition}[equation]{Proposition}
\newtheorem{corollary}[equation]{Corollary}
\newtheorem*{theoremstar}{Theorem}
\theoremstyle{definition}
\newtheorem{definition}[equation]{Definition}
\newtheorem{example}[equation]{Example}
\newtheorem{remark}[equation]{Remark}
\newtheorem*{remark*}{Remark}
\newcommand{\defining}[1]{{\emph{#1}}}
\newcommand{\definedas}{:=}
\newcommand{\itemref}[1]{(\ref{#1})}
\newcommand{\reals}{{\mathbb{R}}}
\newcommand{\integers}{{\mathbb{Z}}}
\newcommand{\naturals}{{\mathbb{N}}}
\newcommand{\complexes}{{\mathbb{C}}}
\newcommand{\field}{{\mathbb{F}}}
\def\doCal#1{%
\ifx#1\doAllCalEnd\def\doAllCal{\relax}\else%
 \expandafter\edef\csname#1cal\endcsname{{\noexpand\mathcal #1}}\fi}
\def\doAllCal#1{\doCal#1\doAllCal}
\def\doBar#1{%
\ifx#1\doAllBarEnd\def\doAllBar{\relax}\else%
 \expandafter\edef\csname#1bar\endcsname{{\noexpand\overline{#1}}}\fi}
\def\doAllBar#1{\doBar#1\doAllBar}
\def\doWiggle#1{%
\ifx#1\doAllWiggleEnd\def\doAllWiggle{\relax}\else%
 \expandafter\edef\csname#1wiggle\endcsname{{\noexpand\tilde{#1}}}\fi}
\def\doAllWiggle#1{\doWiggle#1\doAllWiggle}
\def\doBold#1{%
\ifx#1\doAllBoldEnd\def\doAllBold{\relax}\else%
 \expandafter\edef\csname#1bold\endcsname{{\noexpand\bf #1}}\fi}
\def\doAllBold#1{\doBold#1\doAllBold}
\newcommand{\Largestrut}{\mbox{{\Large\strut}}}
\newcommand{\largestrut}{\mbox{{\large\strut}}}
\definecolor{llgray}{RGB}{230,230,230}
\newcommand{\highlight}[1]{\ifmmode{\text{\sethlcolor{llgray}\hl{$#1$}}}\else{\sethlcolor{llgray}\hl{#1}}\fi}
\newcommand{\switchmarginpar}{
\if@reversemargin
\normalmarginpar
\else
\reversemarginpar
\fi
}
\DeclareMathOperator{\Aut}{Aut}
\DeclareMathOperator{\BSU}{BSU}
\DeclareMathOperator{\BSO}{BSO}
\newcommand{\Cen}{C}
\DeclareMathOperator{\Cat}{Cat}
\DeclareMathOperator{\Gr}{Gr}
\DeclareMathOperator{\Hom}{Hom}
\DeclareMathOperator{\Inj}{Inj}
\DeclareMathOperator{\id}{id}
\newcommand{\Id}{\mathrm{Id}}
\DeclareMathOperator{\Obj}{Obj}
\DeclareMathOperator{\Out}{Out}
\DeclareMathOperator{\SO}{SO}
\DeclareMathOperator{\Sp}{Sp}
\DeclareMathOperator{\SU}{SU}
\DeclareMathOperator{\U}{U}
\newcommand{\epi}{\twoheadrightarrow}
\newcommand{\pdash}{$p$\kern1.3pt-}
\newcommand{\twodash}{$2$\kern1.3pt-}
\newcommand{\whatever}{\text{\,--\,}}
\newcommand{\Zpinfinity}{\integers/{p}^{\infty}}
\newcommand{\ZpinfinitySpecific}[1]{\integers/{#1}^{\infty}}
\newcommand{\Linkwiggle}{\widetilde{\Lcal}}
\newcommand{\imatrix}{\ibold}
\newcommand{\jmatrix}{\jbold}
\newcommand{\kmatrix}{\kbold}
\newcommand{\imatrixbar}{{\overline{\imatrix}}}
\newcommand{\jmatrixbar}{{\overline{\jmatrix}}}
\newcommand{\kmatrixbar}{{\overline{\kmatrix}}}
\newcommand{\Pbullet}{P^{\bullet}}
\newcommand{\twobytwo}[4]{
   \left(\begin{array}{cc}
   #1 & #2\\
   #3 & #4
   \end{array}\right)}
\newcommand{\character}[1]{\chi_{#1}}
\newcommand{\Tcalbar}{\overline{\Tcal}}
\newcommand{\Fcalbar}{\overline{\Fcal}}
\newcommand{\Tboldbar}{\overline{\Tbold}}
\newcommand{\iboldbar}{\overline{\ibold}}
\newcommand{\subgroupeq}{\leq}
\newcommand{\subgroup}{<}
\newcommand{\supergroup}{>}
\newcommand{\Slominska}{S{\l}omi\'{n}ska}
\newcommand{\xlongrightarrow}[1]{\xrightarrow{\ #1\ }}
\newcommand\restr[2]{{
  \left.\kern-\nulldelimiterspace 
  #1 
  \vphantom{\big|} 
  \right|_{#2} 
  }}
\newcommand{\suchthat}[1]{\left|\, #1 \right. }
\newcommand{\realize}[1]{\left|#1 \right|}
\newcommand{\pcomplete}[1]{#1_{2}^{\wedge}}
\newcommand{\poddcomplete}[1]{#1_{p}^{\wedge}}
\definecolor{llgray}{RGB}{230,230,230}
\begin{document}

\title{A new approach to mod $2$ decompositions\\
of $\BSU(2)$ and $\BSO(3)$}

\author{Eva Belmont}
\address{Department of Mathematics, Northwestern University, Evanston, IL 60208}
\email{ebelmont@northwestern.edu}

\author{Nat\`alia Castellana}
\address{Departament de Matem\`atiques, Universitat Aut\`onoma de Barcelona - BGSMath, Spain}
\email{natalia@mat.uab.cat}

\author{Jelena Grbi\'{c}}
\address{School of Mathematical Sciences, University of Southampton, Southampton, UK }
\email{j.grbic@soton.ac.uk}

\author{Kathryn Lesh}
\address{Department of Mathematics, Union College, Schenectady NY, USA}
\email{leshk@union.edu}

\author{Michelle Strumila}
\address{School of Mathematics and Statistics,
      Melbourne University, Parkville VIC, Australia}
\email{mstrumila@student.unimelb.edu.au}

\date{\today}

\begin{abstract}
Dwyer, Miller and Wilkerson proved that  at the prime~$2$, the classifying spaces of  $\SU(2)$ and $\SO(3)$ can be obtained as a homotopy pushout of the classifying spaces of certain subgroups. In this paper we show explicitly how these decompositions arise from the fusion systems of  $\SU(2)$ and~$\SO(3)$ over maximal discrete \twodash toral subgroups.
\end{abstract}

\maketitle
\markboth{\sc{Belmont, Castellana, Grbi\'{c}, Lesh, and Strumila}}
{\sc{Mod $2$ decompositions of $\BSU(2)$ and $\BSO(3)$}}


\section{Motivation}
In 1987, Dwyer, Miller and Wilkerson described a new homotopical construction of
$\BSU(2)$ completed at~$2$;
they used it to prove the homotopic uniqueness of $\pcomplete{\BSU(2)}$ as
a \twodash complete space whose cohomology is $\field_2[x_4]$ as an unstable algebra over the Steenrod algebra
(see \cite[Theorem 1.1]{DMW1}).

\begin{theoremstar}[{\cite[Theorem 4.1]{DMW1}}]
Let $N(T)\subgroupeq \SU(2)$ be the normalizer of the maximal torus, and let $Q_{16} \subgroupeq O_{48}\subgroupeq N(T)$ be the quaternionic subgroup
(of order~$16$) and the binary octahedral subgroup (of order~$48$), respectively.
Let $X$ be the homotopy pushout
\begin{equation}     \label{eq: DMW original}
\begin{gathered}
\xymatrix{
B\,Q_{16}
   \ar[d]\ar[r]
& BN(T)
   \ar[d]\\
B\,O_{48}
   \ar[r]
& X,
}
\end{gathered}
\end{equation}
where the maps are induced by inclusions. Then the induced map $X\to \BSU(2)$ is a homotopy equivalence after \twodash  completion.
\end{theoremstar}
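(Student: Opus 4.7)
The plan is to realize the pushout square (\ref{eq: DMW original}) as a subgroup decomposition of $\pcomplete{\BSU(2)}$ coming from the fusion system of $\SU(2)$ over a maximal discrete $2$-toral subgroup, in the spirit of Broto--Levi--Oliver.

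First, I would fix a maximal torus $T\subgroupeq\SU(2)$ with $2$-discrete approximation $T_\infty\subgroupeq T$, and set $S=T_\infty\rtimes\integers/2\subgroupeq N(T)$. Then $S$ is a maximal discrete $2$-toral subgroup, isomorphic to the infinite generalized quaternion group $Q_{2^\infty}$, and it contains $Q_{16}$ as a distinguished finite subgroup. Let $\Fcal=\Fcal_S(\SU(2))$ be the associated fusion system. The $p$-local compact group machinery identifies $\pcomplete{\BSU(2)}$ with the $2$-completion of the nerve of a linking system extending $\Fcal$, so it suffices to model the latter as a small homotopy colimit.

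Second, I would determine the $\Fcal$-centric $\Fcal$-radical subgroups of $S$. The classification of finite subgroups of $\SU(2)$ (cyclic, binary dihedral, binary tetrahedral, binary octahedral, binary icosahedral) severely restricts the candidates. Computing $\Out_\Fcal(P)$ in each case and testing whether it contains a strongly $2$-embedded subgroup, I expect to find that, up to $S$-conjugacy, the only finite centric-radical subgroup is $Q_{16}$, and that it is $\Fcal$-essential with $N_{\SU(2)}(Q_{16})=O_{48}$ supplying the necessary outer automorphisms. The remaining centric-radical object is $S$ itself, whose normalizer is encoded by $N(T)$.

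Third, I would apply the Broto--Levi--Oliver subgroup decomposition
\[
\pcomplete{\BSU(2)}\;\simeq\;\pcomplete{\bigl(\hocolim_{\Ocal(\Fcal^{cr})}BP\bigr)}.
\]
With just two conjugacy classes of centric-radical subgroups contributing, the indexing diagram collapses to the span $\bullet\leftarrow\bullet\rightarrow\bullet$ with vertices $B\,N(T)$ and $B\,O_{48}$ and apex $B\,Q_{16}$, which is exactly (\ref{eq: DMW original}). The universal map from the pushout $X$ to $\pcomplete{\BSU(2)}$ is then the asserted equivalence.

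The main obstacle is the second step: verifying that, up to $S$-conjugacy, $Q_{16}$ is the unique finite $\Fcal$-centric $\Fcal$-radical subgroup. Ruling out the binary tetrahedral, binary octahedral, and binary icosahedral subgroups, and the intermediate generalized quaternion subgroups $Q_{2^n}$ with $n\neq 4$, as candidates requires a careful case-by-case analysis of $\Out_\Fcal(P)$ and explicit detection of strongly $2$-embedded subgroups; once this is in hand, the pushout description follows formally from the decomposition.
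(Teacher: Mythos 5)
Your overall strategy (fusion system of $\SU(2)$ over a maximal discrete \twodash toral subgroup, identify the centric radical subgroups, decompose) is the right framework, but two of your concrete steps are wrong. First, the unique proper $\Fcal$-centric $\Fcal$-radical subgroup up to conjugacy is not $Q_{16}$ but the quaternion group $Q=\langle\imatrix,\jmatrix\rangle$ of order~$8$. Indeed $Q_{16}\cap T$ contains $\integers/8$, so the bullet construction gives $Q_{16}^\bullet=S$ and hence $Q_{16}\notin\Obj(\Fcal^\bullet)$, so it cannot be $\Fcal$-centric and $\Fcal$-radical; alternatively, $\Out(Q_{16})$ is a $2$-group of order~$4$, so $Q_{16}$ is not radical. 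Moreover $N_{\SU(2)}(Q_{16})$ is not $O_{48}$: since $Q_{16}$ contains the characteristic subgroup $\integers/8\subgroup T$, its normalizer lies in $N(T)$ and is the generalized quaternion group of order~$32$. The correct identifications are $N_{\SU(2)}(Q)=O_{48}$ and $Q_{16}=N_{S}(Q)=N_{\SU(2)}(Q)\cap N_{\SU(2)}(S)$. (Also, $S$ is not $T_\infty\rtimes\integers/2$: the extension is non-split, all lifts of the generator having order~$4$, so $S\cong Q_{2^\infty}$, as you yourself note.)

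Second, the decomposition you invoke does not ``collapse formally'' to the Dwyer--Miller--Wilkerson square. The subgroup decomposition over $\Ocal(\Fcal^{cr})$ has values $BP$, i.e.\ $BQ$ (of order~$8$) and $BS$, indexed on a category in which the object $Q$ has automorphism group $\Out_\Fcal(Q)\cong\Sigma_3$; the DMW pushout, by contrast, has vertices $BN_{\SU(2)}(Q)=B\,O_{48}$, $B\bigl(N_{\SU(2)}(Q)\cap N_{\SU(2)}(S)\bigr)=B\,Q_{16}$ and $BN(T)\simeq_2 BS$, i.e.\ it is a \emph{normalizer} decomposition, and identifying the orbit-category homotopy colimit with that pushout is exactly the nontrivial step your proposal omits. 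The paper's route is different: one checks $C_{\SU(2)}(P)=Z(P)$ is a finite $2$-group for $P\in\{Q,S\}$, so the centric linking system may be replaced by the transporter category $\Tcal^{cr}$ (Proposition~\ref{proposition:L=T}, Corollary~\ref{corollary:T=BG}); then \Slominska's subdivision construction (Proposition~\ref{proposition:subdivision}) replaces $\Tcal^{cr}$ by the poset $\{Q\}\leftarrow\{Q\subgroup S\}\rightarrow\{S\}$ with automorphism groups $O_{48}$, $Q_{16}$, $S$, and Thomason's theorem identifies its nerve with the pushout of $B\,O_{48}\leftarrow B\,Q_{16}\rightarrow BS$. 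To repair your argument you would need either to follow this normalizer-decomposition route or to supply the extra analysis relating $\hocolim_{\Ocal(\Fcal^{cr})}BP$ to the DMW square; as written, the key step is missing and the groups fed into it are the wrong ones.
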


Since $\SO(3)$ is a central quotient of $\SU(2)$,
a similar description of $\pcomplete{\BSO(3)}$ as a homotopy pushout can be derived from that of
$\pcomplete{\BSU(2)}$ (see \cite[Corollary 4.2]{DMW1}) and used to prove the same homotopic uniqueness with respect to the cohomology $\field_2[w_2,w_3]$ as an unstable algebra over the Steenrod algebra.
The pushout \eqref{eq: DMW original} was also studied in~\cite{JM-Arcata}, with broader methods, and interpreted it as an homotopy colimit over an orbit category with respect to certain \twodash toral subgroups of $\SU(2)$. They also applied their methods to another example, $\BSU(3)$ at $p=2$.

In 1990, M\"uller \cite{Muller} used the theorem above to give a complete description of the set of homotopy classes of maps from $\BSU(2)$ to~$BH$, where $H$ is one of
$\U(n)$, $\SU(n)$, $\SO(n)$, or~$\Sp(n)$. The result is in terms of the characters of the subgroups $Q_{16}$ and~$O_{48}$, and is one of the few examples in which a complete description of the set of homotopy classes of maps between classifying spaces of compact Lie groups $\left[BG,BH\right]$ is known.

Jackowski and McClure \cite{JM} proved that, up to \pdash completion, the classifying space of a compact Lie group $G$ can be obtained as a homotopy colimit of centralizers of elementary abelian \pdash subgroups.
Later Jackowski, McClure, and Oliver \cite{JMO} described $BG$ as a homotopy colimit  of classifying spaces of a family of \pdash toral subgroups, again up to \pdash completion. This decomposition is the key tool that allowed them to classify the homotopy classes of selfmaps of~$BG$. However, the earlier decomposition~\eqref{eq: DMW original} realised by Dwyer, Miller and Wilkerson did
not fit into these general methods, and was obtained ad hoc for the group~$\SU(2)$, and from there for~$\SO(3)$. The main results of our paper, Theorems~\ref{theorem:DMWSU(2)}
and~\ref{theorem:DMWSO(3)}, will show how the Dwyer-Miller-Wilkerson results can be reinterpreted in a discrete and combinatorial context in terms of normalizers of discrete \twodash toral subgroups, thereby situating it within a new general theoretical framework.

For finite groups~$G$, Dwyer \cite{Dwyer-Homology} unified the two previously known homology decompositions of $BG$ (\cite{JM}, \cite{JMO})
at the prime~$p$. The tool was the Borel construction for the conjugation action of~$G$ on a poset of subgroups of $G$ that is closed under conjugation.
The previous examples (\cite{JM}, \cite{JMO}) are obtained by using the poset
of nontrivial \pdash subgroups or nontrivial elementary abelian \pdash subgroups.
Dwyer formalized the notion of a homology decomposition of a topological space, and introduced three specific examples for classifying spaces of finite groups: the centralizer decomposition, the subgroup decomposition, and the normalizer decomposition.
We emphasize that the normalizer decomposition for finite groups was new in this context. It is indexed on a poset obtained from
the subdivision category of the poset of nontrivial finite \pdash groups;
the subgroups whose classifying spaces appear in the homology decomposition are then intersections of normalizers of \pdash subgroups. The existence of such a decomposition when $G$ is a compact Lie group is due to Libman \cite{Libman-Minami}, using techniques involving the action of~$G$ on the poset of nontrivial \pdash toral subgroups of~$G$.

Recently, the homotopy theory of saturated fusion systems on
discrete \pdash toral  groups has provided a new, more general framework for dealing with the homotopy type of \pdash completed classifying spaces of Lie groups
(\cite{BLO-Discrete}), in addition to other examples coming from finite loop spaces (\cite{BLO-LoopSpaces}). Saturated fusion systems
encode in a category the essential \pdash local information of the compact Lie group $G$ that is needed to uniquely determine the homotopy type of~$\poddcomplete{BG}$ (see \cite{Chermak}, \cite{Oliver-ExistenceL}, \cite{LL-ExistenceL}).
At first, this theory was developed by Broto, Levi and Oliver \cite{BLO-Finite} for saturated fusion systems over finite \pdash groups.
In this context, Libman \cite{Libman-normalizer} proved the existence of the normalizer decomposition for classifying spaces of saturated fusion systems over finite \pdash groups, extending the previous result of Dwyer.

In a forthcoming work, we will give a general setup for
a normalizer decomposition for classifying spaces of saturated fusions systems over discrete \pdash toral groups, extending the previous work of Libman for finite \pdash groups. We will also give new examples.
The main results of the current paper are
Theorems~\ref{theorem:DMWSU(2)} and~\ref{theorem:DMWSO(3)}, in which we explicitly show how the Dwyer-Miller-Wilkerson decompositions can be explained in terms of
the fusion systems of $\SU(2)$ and $\SO(3)$ at $p=2$ by considering the data that is used to encode the normalizer decomposition.

Sections~\ref{section: fusion systems} and~\ref{section: linking systems} describe the necessary background on the homotopy theory of fusion systems over discrete \pdash toral groups.
Section~\ref{section:2local} contains explicit computations of the \twodash local data of $\SU(2)$ required for proving the main result.
In Section~\ref{section: SU(2) SO(3)} we describe the nature of the
Dwyer-Miller-Wilkerson decompositions
of $\BSU(2)$ and $\BSO(3)$
as the normalizer decomposition with respect to the fusion systems of $\SU(2)$ and $\SO(3)$ at the prime~$2$.

\medskip

\subsection*{Acknowledgements.}
This paper is the first part of the authors' Women in Topology III project. A second part of that project will appear in a separate article. We are grateful to the organizers of the Women in Topology III workshop, as well as to the Hausdorff Research Institute for Mathematics, where the workshop beginning this research was held.
The Women in Topology III workshop was supported by NSF grant DMS-1901795, the AWM ADVANCE grant NSF HRD-1500481, and Foundation Compositio Mathematica. The second author was partially supported by FEDER-MEC grant MTM2016-80439-P.


\section{Fusion systems}
\label{section: fusion systems}

We start with a brief overview of fusion systems over discrete
\pdash toral groups, as given in the work of Broto, Levi and Oliver \cite{BLO-Discrete}.
A \defining{\pdash toral group} is an extension of a torus, $(S^1)^k$, by a finite \pdash group, and \pdash toral groups play the role for
compact Lie groups that is played by \pdash groups in the finite group setting. In order to make combinatorial models, however, one works with
a discrete version of \pdash toral groups.
The circle $S^1$ is replaced by its discrete \pdash analogue, $\Zpinfinity$,
the union of the cyclic \pdash groups $\integers/p^n$ under the standard inclusions.

\begin{definition}
A \defining{discrete \pdash toral group} is a group $P$ given by an extension
\[
1\longrightarrow \left(\Zpinfinity\right)^k \longrightarrow P\longrightarrow \pi_{0}P\longrightarrow 1,
\]
where $k$ is a nonnegative integer and $\pi_{0}P$ is a finite \pdash group.
We define the \defining{identity component} $P_{0}$ of $P$ by
$P_{0}\definedas \left(\Zpinfinity\right)^k$.
We call $\pi_{0}P$ the \defining{set of components} of~$P$.
\end{definition}

Given discrete \pdash toral groups $P$ and $Q$, let $\Hom (P,Q)$
denote the set of group homomorphisms from $P$ to $Q$, and let $\Inj
(P,Q)$ denote the set of group monomorphisms. If $P$ and $Q$ are subgroups
of a larger group~$S$, then we write $\Hom _S(P,Q)$ for
the set of  those homomorphisms induced by conjugation by
an element of~$S$. The following definition gives a tool to encode information on conjugacy relations among discrete \pdash toral subgroups of a compact Lie group.

\begin{definition}\cite[Definition 2.1]{BLO-Discrete}\label{def:fusionsystem}
A \defining{fusion system} $\Fcal$ over a discrete \pdash toral group $S$ is a
subcategory of the category of groups, defined as follows. The objects of
$\Fcal$ are all of the subgroups of~$S$. The morphism sets $\Hom_{\Fcal}(P,Q)$
contain only group monomorphisms, and satisfy the following conditions.
\begin{enumerate}[(i)]
\item
\label{item: conjugation condition}
$\Hom_S(P,Q)\subseteq \Hom_{\Fcal}(P,Q)$ for all $ P,Q\subgroupeq  S $.
In particular, conjugation by the identity element, i.e. subgroup inclusions, are in~$\Fcal$.
\item
\label{item: inclusion condition}
Every morphism in $\Fcal$ factors as the composite of an isomorphism in $\Fcal$ followed by a subgroup inclusion.
\end{enumerate}
Two subgroups $P,P' \subgroupeq  S $
are called \defining{$\Fcal$-conjugate} if they are isomorphic as objects of~$\Fcal$.
\end{definition}

For a subgroup $P\in\Obj(\Fcal)$, let $ \Aut_{\Fcal}(P) = \Hom_{\Fcal}(P,P)$ be the set of all morphisms in $\Fcal$ from $P$ to itself. We write $\Aut_{S}(P)\subgroupeq \Aut_{\Fcal}(P)$ for automorphisms of $P$ induced by conjugation by elements of~$S$.
Note that $\Aut_{P}(P)\cong P/Z(P)$ is the inner automorphism group of~$P$ (where $Z(P)$ denotes the center of~$P$). Just as the group-theoretical outer automorphism group of $P$ is defined as $\Out(P)\definedas \Aut(P)/\Aut_P(P)$, we define
$\Out_{\Fcal}(P) \definedas \Aut_{\Fcal}(P)/\Aut_{P}(P)$. Likewise, to restrict attention to outer automorphisms that are induced by conjugation in the supergroup~$S$, we write $\Out_{S}(P)=\Aut_{S}(P)/\Aut_{P}(P)$.

\begin{example}\label{ex:fusionG}
Let $G$ be a finite group, and let $S$ be a Sylow \pdash subgroup of~$G$. With these data we can define a fusion system denoted~$\Fcal_S(G)$. The objects of $\Fcal_S(G)$ are subgroups $P\subgroupeq  S$,
and morphisms from $P$ to $Q$ are group homomorphisms induced by conjugation in~$G$;
that is,  $\Hom_{\Fcal_S(G)}(P,Q)\definedas \Hom_{G}(P,Q)$.
The category $\Fcal_S(G)$ satisfies conditions \itemref{item: conjugation condition} and \itemref{item: inclusion condition} in Definition~\ref{def:fusionsystem}.
\end{example}

However, the definition of a fusion system includes examples that are too general for the aim of developing the homotopy theory of classifying spaces.
For example, one can take the fusion system consisting of all subgroups of~$S$, with morphisms given by all group monomorphisms.
Puig identified the two  key properties of  Example~\ref{ex:fusionG} that need to be abstractly formalized: the \pdash group $S$ should play the role of a  maximal \pdash subgroup, and  morphisms should behave like conjugations.
Puig introduced such axioms in a new definition of a \defining{saturated fusion system} over a finite \pdash group in \cite{Puig} as an abstract model of the \pdash local information about a finite group. The technical details can also be found in \cite{BLO-Finite}.

Aiming to describe an algebraic model for classifying spaces of compact Lie groups, Broto, Levi and Oliver first generalized the definition of a fusion system by considering discrete \pdash toral groups instead of finite \pdash groups \cite[Definition~2.1]{BLO-Discrete}. The starting point is the theory of maximal tori of Lie groups and their normalizers.
A~compact Lie group $G$ has a maximal torus, denoted~$T$, which is unique up to conjugacy and is a maximal connected abelian subgroup.
The Weyl group $W_G(T)\definedas N_G(T)/T$ is
a finite group, and we fix a Sylow \pdash subgroup $W_{p}\subgroupeq  W_G(T)$.
Let $N_p$ denote the inverse image of~$W_{p}$ in~$N_G(T)$.
The action of $W$ on $T$ restricts to an action of~$W_p$, and we have $N_p$
as the corresponding extension
\[
1\longrightarrow T\longrightarrow N_{p}\longrightarrow W_{p}\longrightarrow 1.
\]
Then $N_p$ is a maximal \pdash toral subgroup of~$G$ and is unique up to conjugacy in~$G$.

Any \pdash toral group has a dense discrete \pdash toral subgroup such that the inclusion morphism induces a mod $p$ equivalence on classifying spaces (see \cite[Corollary~1.2, Proposition~2.3]{Feshbach}). Let $S\subgroupeq  N_{p}$ be such a dense discrete \pdash toral subgroup. The group $S$ is a maximal discrete \pdash toral subgroup  of~$G$;  that is, any other  discrete \pdash toral subgroup of $G$ is conjugate by an element of~$G$ to a subgroup of~$S$.
More details be found in the proof of Proposition~9.3 in \cite{BLO-Discrete}.

\begin{definition}\label{definition:fusionGLie}
Let $G$ be a compact Lie group with maximal discrete \pdash toral subgroup~$S$. The \defining{fusion system of~$G$}, denoted $\Fcal _S(G)$, has as its object set all subgroups of $S$, and for $ P,Q \subgroupeq  S$ the morphisms are
$\Hom _{\Fcal_S(G)}(P,Q) \definedas \Hom _G(P,Q)$.
\end{definition}

Building on the categorical setup for finite groups, Broto, Levi, and Oliver
\cite[Definition~2.2]{BLO-Discrete} defined a \defining{saturated fusion system} over a discrete \pdash toral group~$S$. It has the same favorable characteristics as those over finite groups, namely that $S$ plays the role of a Sylow \pdash subgroup and that morphisms behave like conjugations.  The following proposition tells us that the fusion system of a compact Lie group has the desired properties.

\begin{proposition}\cite[Proposition~8.3]{BLO-Discrete}
\label{proposition:out-finite}
If $G$ is a compact Lie group with maximal discrete \pdash toral subgroup~$S$,
then the fusion system $\Fcal _S(G)$ is saturated.
\end{proposition}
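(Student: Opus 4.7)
The plan is to verify the saturation axioms of \cite[Definition~2.2]{BLO-Discrete} for $\Fcal_S(G)$ one at a time, using the fact that every morphism in $\Fcal_S(G)$ comes from a conjugation in the ambient compact Lie group~$G$, together with standard structure theorems for $p$-toral subgroups of compact Lie groups.

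The first reduction I would carry out is to relate discrete $p$-toral subgroups to their topological closures. Given a discrete \pdash toral subgroup $P\subgroupeq S$, its closure $\bar{P}\subgroupeq G$ is a (continuous) \pdash toral group with $P$ dense in $\bar{P}$ and $\pi_0 P \cong \pi_0\bar{P}$, so that conjugation in $G$ induces a bijection between $G$-conjugacy classes of discrete \pdash toral subgroups and $G$-conjugacy classes of \pdash toral subgroups. Since $P$ is dense in~$\bar{P}$, one also has $N_G(P)=N_G(\bar{P})$ and $C_G(P)=C_G(\bar{P})$ as closed subgroups of~$G$. This lets me trade questions about morphism sets $\Hom_{\Fcal_S(G)}(P,Q) = \Hom_G(P,Q)$ for questions about the transporter spaces $N_G(\bar{P},\bar{Q}) = \{g\in G\mid g\bar{P}g^{-1}\subgroupeq \bar{Q}\}$, which are manageable subspaces of the compact Lie group~$G$.

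Next, for each $\Fcal$-conjugacy class I would choose a representative $P\subgroupeq S$ with $|N_S(P)/N_S(P)_0|$ maximal among conjugates and $N_S(P)_0$ of maximal rank (this is the discrete \pdash toral analogue of choosing a fully normalized representative); the existence of such a representative follows because the normalizer $N_G(\bar{P})$ is itself a compact Lie group and $S\cap N_G(\bar{P})$ can be conjugated in $G$ to sit inside a maximal discrete \pdash toral subgroup of $N_G(\bar{P})$, which is unique up to conjugacy. For such $P$, the Sylow axiom $\Aut_S(P)\in\mathrm{Syl}_p(\Aut_\Fcal(P))$ reduces, after passing to $\bar{P}$, to the assertion that $N_S(\bar{P})/C_S(\bar{P})\cdot Z(\bar{P})$ is a maximal discrete \pdash toral subgroup of the compact Lie group $N_G(\bar{P})/C_G(\bar{P})\cdot Z(\bar{P})$, which is immediate from the maximality step. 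The extension axiom is similar: given $\varphi = c_g\in\Hom_\Fcal(P,S)$ with $\varphi(P)$ fully centralized, one extends $\varphi$ to the largest $P^*\subgroupeq N_S(P)$ with $c_g\Aut_{P^*}(P)c_g^{-1}\subgroupeq \Aut_S(\varphi(P))$ by finding, inside the centralizer $C_G(\varphi(P))$, a $G$-conjugate of $gP^*g^{-1}$ sitting in $S$; this uses that a discrete \pdash toral subgroup of the compact Lie group $C_G(\varphi(\bar{P}))$ can be conjugated into any chosen maximal discrete \pdash toral subgroup thereof.

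The main obstacle I anticipate is in the careful bookkeeping required when passing between discrete and continuous \pdash toral subgroups: one must verify that maximality of $S$ as a discrete \pdash toral subgroup of $G$ is inherited by $S\cap N_G(\bar{P})$ and $S\cap C_G(\bar{P})$ as discrete \pdash toral subgroups of these closed subgroups, at least up to a $G$-conjugation lying in the relevant normalizer or centralizer. Once this inheritance property is in place, the Sylow and extension axioms reduce to the conjugacy and maximality properties of \pdash toral subgroups in the Lie groups $N_G(\bar{P})$ and $C_G(\bar{P})$. The finiteness-of-conjugacy-classes condition in the definition of a saturated fusion system over a discrete \pdash toral group is handled by noting that the ambient compact Lie group has only finitely many conjugacy classes of \pdash toral subgroups of each component type.
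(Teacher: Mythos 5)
The paper offers no proof of Proposition~\ref{proposition:out-finite} to compare against: it is quoted from \cite{BLO-Discrete}, so the only benchmark is the argument there, whose general shape (pass to closures, use conjugacy of maximal \pdash toral subgroups, run Sylow-type arguments inside the compact Lie groups $N_G(-)$ and $C_G(-)$) your sketch does echo. But your first reduction is false as stated. For a discrete \pdash toral $P\leq S$ one always has $C_G(P)=C_G(\overline{P})$ and $N_G(P)\leq N_G(\overline{P})$, yet the reverse inclusion, the isomorphism $\pi_0P\cong\pi_0\overline{P}$, and the claimed bijection of conjugacy classes all fail in general. For example, in $G=\U(2)$ let $P_\lambda\leq(\Zpinfinity)^2$ be the graph of multiplication by a \pdash adic unit $\lambda\notin\rationals$: each $P_\lambda\cong\Zpinfinity$ is dense in the maximal torus, so all the $\overline{P_\lambda}$ coincide, while $P_\lambda$ and $P_\mu$ are $G$-conjugate only when $\mu\in\{\lambda,\lambda^{-1}\}$, and the Weyl element exchanging the two diagonal circles normalizes $\overline{P_\lambda}$ without normalizing $P_\lambda$. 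So ``trading $P$ for $\overline{P}$'' loses exactly the information saturation is about; this is why \cite{BLO-Discrete} must work with snugly embedded subgroups, and why the existence of fully centralized/normalized representatives (attainment of the maximum in the lexicographic sense of rank and order of the component group) needs its own argument, since subgroups of $S$ can have arbitrarily large finite component groups. Your appeal to maximality of $S\cap N_G(\overline{P})$ inherits the same defect, because $N_S(P)$ and $S\cap N_G(\overline{P})$ need not agree.

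Second, you never address axiom (III) of \cite[Definition~2.2]{BLO-Discrete}, the continuity axiom: if $P_1\leq P_2\leq\cdots$ is an increasing chain in $S$ with union $P$ and $\phi\colon P\to S$ restricts to a morphism of $\Fcal_S(G)$ on each $P_n$, then $\phi$ must itself be a morphism. (There is no ``finiteness of conjugacy classes'' axiom, which your last sentence proposes to verify instead; such finiteness statements are theorems, proved later via the bullet construction.) For $\Fcal_S(G)$ this axiom is precisely where compactness of $G$ enters: the sets $A_n=\left\{g\in G \mid gxg^{-1}=\phi(x)\ \text{for all } x\in P_n\right\}$ are closed, nonempty and nested, so $\bigcap_n A_n\neq\emptyset$, and any $g$ in the intersection realizes $\phi$ as a conjugation. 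Until the closure dictionary is repaired (or replaced by the snug-embedding analysis of \cite{BLO-Discrete}) and axiom (III) is checked, your proposal does not establish saturation.
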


Saturated fusion systems over finite \pdash groups are finite categories, but in general a saturated fusion system over a discrete \pdash toral group might be very large. Without making any further restrictions,
one would obtain
a description of $\poddcomplete{BG}$ as a homotopy colimit over a large category.
The strategy is to locate smaller subcategories of saturated fusion systems,
with finitely many isomorphism classes of objects, which still control the same \pdash local information.
For this reason, Broto, Levi, and Oliver introduced the \defining{bullet construction}, $(\whatever)^\bullet\colon \Fcal \longrightarrow\Fcal$,
whose image is a full subcategory of $\Fcal$ that has
a finite number of isomorphism classes of objects, but retains the same \pdash local information as~$\Fcal$.

The bullet construction is somewhat involved.
Let $S$ be a discrete \pdash toral group, with identity component $T=S_{0}$,
and let $W=\Aut_\Fcal(T)$. For $P\subgroupeq  T$, define $C_W(P)$ to be the group consisting of all $w\in W$ such that $\restr{w}{P}=\id_P$. Further, for $P\subgroupeq  T$ we set $I(P)=T^{C_W(P)}\subgroupeq  T$, that is, the subgroup consisting of all elements of the torus that are fixed by elements of $W$ that also fix $P$ pointwise.

\begin{proposition}
\cite[Definition~3.1, Lemma~3.2, Proposition~3.3]{BLO-Discrete}
\label{proposition: bullet}
Let $\Fcal$ be a saturated fusion system over a discrete \pdash toral group~$S$ with
identity component $T=S_{0}$, and let $p^m$ be the exponent of~$S/T$.
For each $P\subgroupeq  S$, consider
\[
P^{[m]}=
\langle g^{p^m}\suchthat{\largestrut g\in P}
\rangle\subgroupeq  P\cap T.
\]
There is an idempotent endofunctor $(\whatever)^\bullet\colon \Fcal \longrightarrow\Fcal$, the \defining{bullet functor}, given by
\[
P^\bullet=P\cdot I(P^{[m]})_0
   =\left\{gt\suchthat{g\in P,\  t\in I(P^{[m]})_0}\right\}.
\]
The full subcategory $\Fcal^{\bullet}\subseteq\Fcal$ with
$\Obj(\Fcal^{\bullet})\definedas \left\{P^\bullet \suchthat{P\subgroupeq  S}\right\}$
is closed under $\Fcal$-conjugacy and contains finitely many
$S$-conjugacy classes.
\end{proposition}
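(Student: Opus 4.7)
The plan is to verify the four assertions of the proposition in turn: that $P^\bullet$ is a subgroup of~$S$, that $(\whatever)^\bullet$ extends to a functor on~$\Fcal$, that it is idempotent, and that $\Fcal^\bullet$ has only finitely many $S$-conjugacy classes. The unifying theme is that $I(Q)$ for $Q\subgroupeq T$ depends on~$Q$ only through the pointwise stabilizer $C_W(Q)\subgroupeq W$, so a morphism that preserves or conjugates that stabilizer automatically transports $I(Q)$. For the first claim I would show that $P$ normalizes $I(P^{[m]})_0$: since $P^{[m]}$ is characteristic in~$P$, conjugation by $g\in P$ preserves $P^{[m]}$ setwise, whence the induced $c_g\in W=\Aut_\Fcal(T)$ satisfies $c_g\,C_W(P^{[m]})\,c_g^{-1} = C_W(P^{[m]})$. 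Then $c_g$ permutes $I(P^{[m]})=T^{C_W(P^{[m]})}$ and preserves its identity component, so $P^\bullet = P\cdot I(P^{[m]})_0$ is a subgroup.

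Next, I would lift morphisms in two stages. For a subgroup inclusion $P\subgroupeq Q$, I have $P^{[m]}\subgroupeq Q^{[m]}$, hence $C_W(Q^{[m]})\subgroupeq C_W(P^{[m]})$ and $I(P^{[m]})_0\subgroupeq I(Q^{[m]})_0$, so $P^\bullet\subgroupeq Q^\bullet$. For an $\Fcal$-isomorphism $\phi\colon P\xrightarrow{\cong}P'$, I would use the saturation axioms from~\cite{BLO-Discrete} to extend the restriction $\phi|_{P\cap T}$ to an element $\bar\phi\in W$. Conjugation by $\bar\phi$ sends $C_W(P^{[m]})$ onto $C_W((P')^{[m]})$, so $\bar\phi$ restricts to an isomorphism $I(P^{[m]})_0\xrightarrow{\cong}I((P')^{[m]})_0$, and combining with~$\phi$ yields a morphism $\phi^\bullet\colon P^\bullet\to (P')^\bullet$ in~$\Fcal$. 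Since condition~\itemref{item: inclusion condition} of Definition~\ref{def:fusionsystem} factors every morphism as an isomorphism followed by an inclusion, these two cases assemble into a functor on~$\Fcal$.

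For idempotency, I would observe that every torus element of $P^\bullet$ beyond those of~$P$ lies in $T^{C_W(P^{[m]})}$ and so is pointwise fixed by $C_W(P^{[m]})$; hence $(P^\bullet)^{[m]}$ has the same pointwise centralizer in~$W$ as $P^{[m]}$, giving $I((P^\bullet)^{[m]})_0 = I(P^{[m]})_0\subgroupeq P^\bullet$ and $(P^\bullet)^\bullet = P^\bullet$. Finiteness of $S$-conjugacy classes in~$\Fcal^\bullet$ then follows from the observation that the identity component $(P^\bullet)_0 = I(P^{[m]})_0$ is the fixed subtorus of some subgroup of the finite group~$W$ (so admits finitely many possibilities), while the finite quotient $P^\bullet/(P^\bullet)_0$ has order bounded uniformly in~$P$ in terms of~$|S/T|$ and the bounded $\pi_{0}$ of fixed-point subgroups of~$T$.

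The main obstacle I anticipate is the extension step in the functoriality argument, namely verifying that every $\Fcal$-morphism between subgroups of~$T$ actually extends to an element of $W=\Aut_\Fcal(T)$. This is where the saturation axioms for fusion systems over discrete \pdash toral groups in~\cite{BLO-Discrete} do genuine work beyond the formal properties recorded in Definition~\ref{def:fusionsystem}; without such a torus-extension property, there is no way to tie a local morphism between torus subgroups to the global Weyl-group action needed to transport the defining stabilizer $C_W(P^{[m]})$.
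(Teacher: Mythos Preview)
The paper does not prove this proposition; it is stated with a citation to \cite[Definition~3.1, Lemma~3.2, Proposition~3.3]{BLO-Discrete} and used as a black box thereafter. There is therefore no proof in the present paper to compare your proposal against.

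That said, your outline follows the Broto--Levi--Oliver argument reasonably closely and correctly isolates the key technical input, namely that $\Fcal$-morphisms between subgroups of the maximal torus extend to elements of~$W$. The step that needs more care is the ``combining'' in the isomorphism case. Given $\phi\colon P\xrightarrow{\cong}P'$ and an extension $\bar\phi\in W$ of $\phi|_{P^{[m]}}$, you assert that $\phi$ and $\bar\phi$ together yield a morphism $\phi^\bullet$ on $P^\bullet=P\cdot I(P^{[m]})_0$. But two partial homomorphisms defined on $P$ and on $I(P^{[m]})_0$ do not automatically glue to a homomorphism on the product, and even if they did, one would still need to check that the result lies in~$\Fcal$. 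In \cite{BLO-Discrete} the argument runs differently: one shows directly, using the saturation axioms (in particular axiom~(III) of \cite[Definition~2.2]{BLO-Discrete}), that $\phi$ itself extends to a morphism $P^\bullet\to S$ in~$\Fcal$, and that the extension is unique with image in~$(P')^\bullet$. Your idempotency and finiteness sketches capture the right ideas, though the finiteness argument ultimately rests on a more careful count than just the identity component: one bounds $|P^\bullet/(P^\bullet)_0|$ uniformly and then invokes the finiteness of subgroups of a fixed finite group.
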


\begin{example}
Let $\Fcal$ be a saturated fusion system over a discrete \pdash torus~$T$. In this situation, the group of components of $T$ is trivial and $m=0$.  For all $P\subgroupeq  T$, we have $P^{\bullet}=I(P)_{0}$ since $P\leq I(P)_0$.
\end{example}

We restrict even further to a full subcategory of~$\Fcal^\bullet$
that will play an important role in the proof of Theorem~\ref{theorem:DMWSU(2)}.
Let $\Fcal$ be a saturated fusion system over a discrete \pdash toral group~$S$. Broto, Levi and
Oliver \cite[Proposition~2.3]{BLO-Discrete}
showed that $\Out_\Fcal(P)\definedas\Aut_{\Fcal}(P)/\Aut_{P}(P)$ is finite for all $P\subgroupeq S$.

\begin{definition}\label{definition:Fcentric-Fradical}
Let $\Fcal$ be a saturated fusion system over a discrete \pdash toral group~$S$.
\begin{enumerate}[(i)]
\item \label{item: discrete Fcentric}
    A~subgroup $P\subgroupeq  S$ is called \defining{$\Fcal$-centric}
    if $P$ contains all elements of $S$ that centralize it,
   and likewise all $\Fcal$-conjugates of $P$ contain their
    $S$-centralizers.
\item \label{item: discrete Fradical}
    A subgroup $P\subgroupeq  S$ is called \defining{$\Fcal$-radical}
    if $\Out_{\Fcal}(P)$ contains no nontrivial normal \pdash subgroup.
\end{enumerate}
We write $\Fcal^{c}$ (respectively, $\Fcal^{cr}$) for the full subcategory of $\Fcal $ whose objects are all subgroups of $S$ that are $\Fcal$-centric (respectively, both $\Fcal$-centric and $\Fcal$-radical).
\end{definition}

\begin{remark}  \label{remark: S is radical}
It is a consequence of the saturation axioms that $S$ itself is $\Fcal$-radical. From \cite[Definition 2.2 (i)]{BLO-Discrete}, we have that $\Out_\Fcal(S)$ is finite and of order prime to~$p$.
\end{remark}

We will see in Section~\ref{section: linking systems} that only
the subcategory $\Fcal^{cr}\subseteq\Fcal$ is needed in order to determine a \pdash completed classifying space.
Because of the following proposition, the bullet construction
allows us to narrow the search in specific instances. The approach is exemplified in Section~\ref{section:2local}.

\begin{proposition}
\cite[Proposition 3.3, Corollary~3.5]{BLO-Discrete}
\label{proposition:FiniteCentricRadicals}
Let $\Fcal$ be a saturated fusion system over a discrete \pdash toral group $S$. All $\Fcal$-centric $\Fcal$-radical subgroups of~$S$
are in~$\Obj(\Fcal^{\bullet})$. In particular, there are only finitely many conjugacy classes of such subgroups.
\end{proposition}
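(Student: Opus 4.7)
The key claim is that every $\Fcal$-centric $\Fcal$-radical subgroup $P\subgroupeq S$ satisfies $P=P^\bullet$; granted this, the finiteness statement follows immediately from Proposition~\ref{proposition: bullet}, which asserts that $\Obj(\Fcal^\bullet)$ has only finitely many $S$-conjugacy classes (and $\Fcal$-conjugacy is a coarser relation).

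The plan is to exhibit the quotient $P^\bullet/P$ as a normal $p$-subgroup of $\Out_\Fcal(P)$, so that $\Fcal$-radicality of $P$ forces it to be trivial. First, I would observe that $P^\bullet/P$ is a discrete abelian $p$-group: since $P^\bullet=P\cdot I(P^{[m]})_0$ and $I(P^{[m]})_0$ is a subtorus of $T=S_0$, the quotient is a subquotient of a discrete $p$-torus $(\Zpinfinity)^j$. Second, I would use that $P$ is $\Fcal$-centric to pin down centralizers inside $P^\bullet$: since $C_S(P)\subgroupeq P$, we have $C_{P^\bullet}(P)=Z(P)$, so conjugation gives $\Aut_{P^\bullet}(P)\cong P^\bullet/Z(P)$. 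Quotienting by $\Aut_P(P)\cong P/Z(P)$ yields a canonical isomorphism
\[
\Aut_{P^\bullet}(P)/\Aut_P(P)\ \cong\ P^\bullet/P,
\]
which sits inside $\Out_\Fcal(P)$. (For this to make sense I first need to know $P\trianglelefteq P^\bullet$; I would verify this from the explicit description $P^\bullet=P\cdot I(P^{[m]})_0$ by checking that $P$ normalizes $I(P^{[m]})_0$, which in turn follows from the fact that $\Aut_P(T)\subgroupeq W$ preserves $P^{[m]}$ setwise, hence normalizes $C_W(P^{[m]})$, and hence preserves both $I(P^{[m]})$ and its identity component.)

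The crucial step — and the main technical obstacle — is to upgrade this subgroup of $\Out_\Fcal(P)$ to a \emph{normal} subgroup. Here I would invoke the naturality/idempotency properties of the bullet functor from Proposition~\ref{proposition: bullet}: for any $\alpha\in\Aut_\Fcal(P)$ there is an induced $\alpha^\bullet\in\Aut_\Fcal(P^\bullet)$ restricting to $\alpha$, which in particular carries $P$ to itself inside $P^\bullet$ and hence permutes the $\Aut_{P^\bullet}(P)$-orbits. This conjugation action of $\Aut_\Fcal(P)$ on $\Aut_{P^\bullet}(P)/\Aut_P(P)$ shows that the image is a normal subgroup of $\Out_\Fcal(P)$.

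Putting the pieces together: $P^\bullet/P$ embeds as a normal $p$-subgroup of $\Out_\Fcal(P)$, and by $\Fcal$-radicality (Definition~\ref{definition:Fcentric-Fradical}\itemref{item: discrete Fradical}) this subgroup is trivial, giving $P=P^\bullet$ as desired. I expect the hardest bookkeeping to be establishing normality of $P$ in $P^\bullet$ and the naturality-style extension $\alpha\mapsto\alpha^\bullet$ at the level needed to conclude normality of the quotient inside $\Out_\Fcal(P)$; once that machinery is in place, the argument is essentially the standard ``$\Fcal$-centric kills centralizers, $\Fcal$-radical kills normal $p$-subgroups'' two-step familiar from the analogous statement for finite $p$-groups.
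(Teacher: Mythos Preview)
The paper does not give its own proof of this proposition: it is stated as a citation of \cite[Proposition~3.3, Corollary~3.5]{BLO-Discrete} and used as a black box, so there is no in-paper argument to compare against. Your plan is essentially a sketch of the original argument in \cite{BLO-Discrete}: show $P=P^{\bullet}$ by exhibiting $P^{\bullet}/P$ as a normal $p$-subgroup of $\Out_{\Fcal}(P)$ and then invoke $\Fcal$-radicality.

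One point worth flagging in your sketch: your justification of $P\trianglelefteq P^{\bullet}$ is not quite right as written. You argue that $\Aut_P(T)$ preserves $P^{[m]}$, hence normalizes $C_W(P^{[m]})$, hence preserves $I(P^{[m]})_0$. That shows that $P$ normalizes $I(P^{[m]})_0$ (so that $P^{\bullet}=P\cdot I(P^{[m]})_0$ is indeed a group), but it does not by itself show that $I(P^{[m]})_0$ normalizes $P$, which is what you need for $P\trianglelefteq P^{\bullet}$. Concretely, for $t\in I(P^{[m]})_0$ and $g\in P$ your argument gives $[t,g]\in I(P^{[m]})_0$, not $[t,g]\in P$. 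In \cite{BLO-Discrete} this normality is established as part of Lemma~3.2 using additional properties of the construction, so you would need to import that argument rather than the one you wrote. Once $P\trianglelefteq P^{\bullet}$ is in hand, the rest of your outline (centricity gives $C_{P^{\bullet}}(P)=Z(P)$; functoriality of $(\whatever)^{\bullet}$ gives normality of the image in $\Out_{\Fcal}(P)$; radicality forces triviality) is the standard two-step and matches the source.
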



\section{The classifying space of a fusion system}
\label{section: linking systems}

In \cite{BLO-FiniteGroups}, Broto, Levi, and Oliver introduced fusion systems in homotopy theory in their program to give an algebraic description of self-homotopy equivalences of $\poddcomplete{BG}$
for finite groups~$G$. One then wants to know to what extent the actual homotopy type of $\poddcomplete{BG}$ is determined by algebraic information
encoded in~$\Fcal_S(G)$. Assume that $S\subgroupeq G$
is a Sylow \pdash subgroup of~$G$.
Oliver proved in \cite{Oliver-ExistenceL} that the homotopy type of $\poddcomplete{BG}$ is, in fact, completely determined by~$\Fcal_S(G)$:
given two finite groups $G$ and $H$, there is an equivalence $\poddcomplete{BG}\simeq \poddcomplete{BH}$ if and only if the
associated fusion systems are equivalent.

One can ask, if $\Fcal$ is an abstract saturated fusion system, not known to be defined as the fusion system $\Fcal_S(G)$ of a group~$G$, what topological space plays the role of the associated \pdash completed classifying space?
The nerve of the category $\Fcal$ itself is not a candidate, because it does not give the right answer when the fusion system \emph{does} come from a group. In particular, the center of~$G$, whose conjugations give trivial automorphisms,
is not detected by~$\Fcal_{G}(S)$. For example, if $G=\integers/p$ then $\realize{\Fcal_{G}(S)}$ is contractible. Instead, the classifying space of a saturated fusion system~$\Fcal$ will be the \pdash completed nerve of an abstract category  introduced by Broto, Levi, and Oliver \cite{BLO-Finite}, namely the centric linking system associated to~$\Fcal$, which is set up to recover the ``missing centers."

In order to motivate the abstract definition of a linking system, we
first introduce the notion of a transporter category. For a pair of subgroups $P,Q\subgroupeq  G$, let
$N_{G}(P,Q)=\left\{ g\in G \suchthat{gPg^{-1}\subgroupeq  Q} \right\}$.

\begin{definition}\label{definition:transporter}
If $G$ is a group and $\Hcal$ is a set of subgroups of~$G$, we define
the \defining{transporter category} for~$\Hcal$, denoted $\Tcal_\Hcal(G)$, as the category
whose object set is~$\Hcal$, and whose morphism sets are defined by
\[
\Hom_{\Tcal_\Hcal(G)}(P,Q)=\left\{ g\in G \suchthat{gPg^{-1}\subgroupeq  Q}
                          \right\}=N_{G}(P,Q).
\]
In particular, if $P\in\Hcal$, then the automorphism group $\Aut_{\Tcal_\Hcal(G)}(P)$ is given by~$N_{G}(P)$.
\end{definition}

\begin{example}\label{example: transporter for finite group}
Let $G$ be a finite group, and let $S$ be a Sylow \pdash subgroup of~$G$. Let~$\Hcal$
be the set of all non-trivial subgroups of~$S$.
By \cite[Lemma~1.2]{BLO-FiniteGroups}, there is a functor $\Tcal_\Hcal(G)\rightarrow \Bcal G$, where $\Bcal G$ is the category with a single object and $G$ as morphisms, that induces a homotopy equivalence
\[
\poddcomplete{\realize{\Tcal_\Hcal(G)}}
     \xrightarrow{\,\simeq\,} \poddcomplete{BG}
\]
if $\Hcal$ is an ``ample" family in the sense of Dwyer (see \cite{Dwyer-Homology}).\footnote{The authors used the notation $\Linkwiggle$ for the transporter category.}
The statement  uses Dwyer's results on homology decompositions. Indeed, \cite{Dwyer-Homology}
gives multiple families $\Hcal$ of subgroups possessing the feature that the transporter category for $\Hcal$ recovers the homotopy type of~$\poddcomplete{BG}$.
\end{example}

Example~\ref{example: transporter for finite group} suggests the likely usefulness of
a category that mimics a transporter category for an abstract setting.
In particular, \cite{BLO-Finite} and \cite{BLO-LoopSpaces} introduced the
notion of a centric linking system to provide an appropriate analogue
for the transporter category,
 and from there for the classifying space of a group. Recall that $\Fcal^{c}$ denotes the full subcategory of a fusion system $\Fcal$ whose objects are $\Fcal$-centric (Definition~\ref{definition:Fcentric-Fradical}).

\begin{definition}\cite[Definition 4.1]{BLO-Discrete}
\label{definition: linking}
Let $\Fcal $ be a fusion system over the discrete \pdash toral group~$S$.  A centric linking system associated to~$\Fcal$ is a category $\Lcal$
whose objects are the $\Fcal$-centric subgroups of $S$, together with a functor
\[
\Lcal \xlongrightarrow{\pi} \Fcal^c
\]
and distinguished monomorphisms
$\delta_P\colon P\rightarrow \Aut_\Lcal(P)$ for each $\Fcal$-centric subgroup $P\subgroupeq  S$, that satisfy the following conditions.

\begin{enumerate}[(i)]
\item The functor $\pi$ is the identity on objects, and surjective on morphisms. More precisely, for each pair of objects $ P,Q \in \Lcal$,
the center $Z(P)$ acts freely on $\Hom _{\Lcal}(P,Q)$ by precomposition, and
$\pi_{P,Q}$ induces a bijection
\[
\Hom _{\Lcal}(P,Q)/Z(P)\xrightarrow{\ \cong\ } \Hom _{\Fcal}(P,Q).
\]

\item For each $\Fcal$-centric subgroup $P\subgroupeq  S$ and each $ g \in P$,
the functor $\pi$ sends the element
$\delta _{P}(g) \in \Aut_{\Lcal}(P)$ to $c_g \in \Aut _{\Fcal}(P)$.

\item For each $ f \in \Hom _{\Lcal}(P,Q) $ and each $g \in P$, the following square in~$\Lcal$ commutes:
\[
\diagram
P \rto^{f} \dto_{\delta _{P}(g)}
      & Q \dto^{\delta_{Q}(\pi(f)(g))}
\cr P \rto^f & Q.
\enddiagram
\]
\end{enumerate}
\end{definition}

The existence and uniqueness, up to equivalence,
of centric linking systems associated to a given saturated fusion system is a key result in the theory.  For saturated fusion systems over a finite \pdash group~$S$,
uniqueness was proven first in \cite{Chermak} by introducing the new theory of localities.
Another proof was given by Oliver in \cite{Oliver-ExistenceL} using the obstruction theory developed in \cite{BLO-Finite}. Later,  \cite{LL-ExistenceL} extended the result to saturated fusion systems over discrete \pdash toral groups.

\begin{theorem}\cite{LL-ExistenceL}
\label{theorem: LL-ExistenceL}
Let $\Fcal$ be a saturated fusion system over a discrete \pdash toral group. Up to equivalence, there exists a unique centric linking system associated to~$\Fcal$.
\end{theorem}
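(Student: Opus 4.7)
The plan is to prove existence and uniqueness simultaneously via Broto-Levi-Oliver obstruction theory, combined with a reduction from the discrete \pdash toral setting to the finite \pdash group setting, where the result is already known from the work of Chermak \cite{Chermak} and Oliver \cite{Oliver-ExistenceL}. Following the pattern of \cite{BLO-Finite}, centric linking systems associated to a saturated fusion system $\Fcal$ should be classified by the higher derived limits of the center functor on the centric orbit category of $\Fcal$, sending an $\Fcal$-centric subgroup $P$ to its center $Z(P)$. The obstruction to existence of a centric linking system lives in the third such derived limit, while distinct equivalence classes are parametrized by the second, so the theorem reduces to showing that both derived limits vanish.

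To make the indexing tractable, I would first apply the bullet construction (Proposition~\ref{proposition: bullet}) to replace $\Fcal^c$ with its equivalent full subcategory $(\Fcal^c)^{\bullet}$, which has only finitely many $S$-conjugacy classes of objects while retaining the same \pdash local information. I would then filter $S$ by a nested sequence of finite \pdash subgroups $S_n$, obtained by truncating the identity component $T=S_0$ to its $p^n$-torsion subgroup and adjoining fixed representatives of the components of~$S$, and check that $\Fcal$ restricts to a compatible sequence of finite saturated fusion systems $\Fcal_n$ over $S_n$ whose colimit recovers $\Fcal^{\bullet}$. For each~$\Fcal_n$ the theorem of Chermak and Oliver supplies a unique associated centric linking system~$\Lcal_n$, and compatibility under the inclusions $\Fcal_n \hookrightarrow \Fcal_{n+1}$ should be forced by this uniqueness at each finite level. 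A Milnor $\lim^1$ argument on the resulting tower would then express the derived limits computing the obstructions for $\Fcal^{\bullet}$ in terms of those for the $\Fcal_n$, where the finite-group vanishing results apply.

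The main obstacle will be in the filtration step, and specifically in controlling the inverse system of centers $\{Z(P_n)\}$, where $P_n = P \cap S_n$ ranges over the finite truncations of a given discrete \pdash toral subgroup~$P$. One must verify that each $\Fcal_n$ is saturated in the finite-group sense, that $\Fcal_n$-centric subgroups detect $\Fcal^{\bullet}$-centric subgroups in the limit, and, crucially, that the inverse system of centers is Mittag-Leffler, so that no $\lim^1$ term obstructs the transfer of the vanishing of derived limits from each $\Fcal_n$ back to $\Fcal^{\bullet}$. This Mittag-Leffler property is the discrete \pdash toral analogue of the finiteness hypotheses that make the original Chermak-Oliver argument run, and it is where the topology of the discrete torus genuinely enters. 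Once this compatibility is in place, the finite-group obstruction vanishing passes to the limit and yields both existence and uniqueness of a centric linking system associated to~$\Fcal$.
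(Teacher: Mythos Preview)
The paper does not prove this theorem at all: it is stated as a background result with a citation to \cite{LL-ExistenceL} and no argument is given. There is therefore nothing in the paper's own treatment to compare your proposal against.

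That said, your sketch is broadly in the spirit of the actual Levi--Libman argument, which does proceed via the Broto--Levi--Oliver obstruction theory and a reduction to the finite case. However, your filtration step is more delicate than you indicate. The finite approximations $\Fcal_n$ are not obtained simply by truncating the torus and restricting morphisms; one must work harder to produce genuinely saturated fusion systems at each finite stage whose centric subgroups match up correctly with those of~$\Fcal^\bullet$, and the compatibility of linking systems along the tower is not automatic from uniqueness alone (uniqueness is up to isomorphism, not canonical isomorphism, so assembling a coherent tower requires an additional rigidity argument). The control of the inverse system and the identification of the relevant higher limits with those of the finite approximations is the substantive content of \cite{LL-ExistenceL}, and your outline underestimates the work involved there. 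If you were actually asked to supply a proof here, the honest move would be to cite \cite{LL-ExistenceL} directly, as the paper does.
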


As a consequence of this uniqueness, we can make the following definition of the classifying space of a saturated fusion system, without explicit reference to the linking system being used.

\begin{definition}
\label{definition:classifyingspacefusion}
Let $\Fcal$ be a saturated fusion system over a discrete \pdash toral group~$S$.
The \defining{classifying space of~$\Fcal$} is defined as $B\Fcal\definedas\poddcomplete{\realize{\Lcal}}$, where $\Lcal$ is a centric linking system associated to~$\Fcal$.
\end{definition}

In \cite[Sections~9 and~10]{BLO-Discrete}, the authors show how the \pdash completions of classifying spaces of compact Lie groups and of \pdash compact groups (as defined by Dwyer and Wilkerson in \cite{dwyer-wilkerson-fixed-point}) can be described as classifying spaces of associated fusion systems. Later, \cite{BLO-LoopSpaces} showed that the \pdash completion of the classifying space of a finite loop space can also be described as the classifying space of a saturated fusion system
at the prime~$p$. Other examples (see \cite[Section~8]{BLO-Discrete}) come from linear torsion groups.

In the case of the fusion system associated to a compact Lie group,
Broto, Levi, and Oliver \cite[Proposition 9.12]{BLO-Discrete}
explicitly construct a centric linking system $\Lcal_S(G)$ associated to $\Fcal_S(G)$
(see Definition~\ref{definition:fusionGLie}).
This construction uses the group structure of $G$, and recovers the homotopy type of $BG$ up to \pdash completion.

\begin{theorem}
\cite[Theorem~9.10]{BLO-Discrete}
\label{theorem:Lpcom=BGpcom}
Let $G$ be a compact Lie group, and fix a maximal discrete \pdash toral subgroup $S\subgroupeq  G$.  There exists a centric linking system $\Lcal_S(G)$ associated to $\Fcal_S(G)$ such that $\poddcomplete{\realize{\Lcal_S(G)}}\simeq \poddcomplete{BG}$.
\end{theorem}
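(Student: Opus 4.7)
The plan is to construct $\Lcal_S(G)$ explicitly from the group-theoretic data of $G$, verify the axioms of Definition~\ref{definition: linking} directly, and then establish the $p$-local homotopy equivalence by factoring through a suitable transporter category and invoking a homology decomposition for $BG$.

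First I would construct $\Lcal_{S}(G)$ by mimicking the transporter category but quotienting out a ``$p$-prime part'' of the centralizer. For each pair of $\Fcal$-centric subgroups $P,Q\subgroupeq S$, set
\[
\Hom_{\Lcal_S(G)}(P,Q)=N_G(P,Q)/C'_G(P),
\]
where $C'_G(P)\triangleleft C_G(P)$ is chosen so that $C_G(P)/C'_G(P)\cong Z(P)$ and $BC'_G(P)$ is $\field_2$-acyclic; for a compact Lie group this requires verifying that for $\Fcal$-centric $P$ the centralizer $C_G(P)$ admits such a complement of $Z(P)$, which uses that $C_S(P)=Z(P)$ together with standard structure results for centralizers in compact Lie groups. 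The functor $\pi\colon\Lcal_S(G)\to\Fcal_S(G)^c$ and the distinguished maps $\delta_P\colon P\to\Aut_{\Lcal_S(G)}(P)$ are then induced by the obvious projections and inclusions, and axioms (i)--(iii) of Definition~\ref{definition: linking} follow from the action of $N_G(P,Q)$ by conjugation combined with the defining property $C_G(P)/C'_G(P)\cong Z(P)$.

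Next I would compare the nerve of $\Lcal_S(G)$ with $BG$ through the auxiliary transporter category $\Tcal^c(G)$ with objects the $\Fcal$-centric subgroups of $S$ and morphism sets $N_G(P,Q)$. There are natural projections
\[
B\Lcal_S(G)\xleftarrow{\ \alpha\ }B\Tcal^c(G)\xrightarrow{\ \beta\ } BG,
\]
and the goal is to show both $\alpha$ and $\beta$ become equivalences after $2$-completion. For $\alpha$, the map induced on each morphism set has fiber $BC'_G(P)$, and by construction these are $\field_2$-acyclic; a Bousfield--Kan style spectral sequence (or equivalently a fibered Quillen Theorem~A argument) then gives an $\field_2$-equivalence of nerves. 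For $\beta$, one uses the extension to compact Lie groups of Dwyer's homology decomposition machinery, namely the subgroup decomposition of $BG$ over a family of $p$-toral subgroups due to Jackowski--McClure--Oliver and Libman, restricted to the $\Fcal$-centric family, which is ample in Dwyer's sense.

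The main obstacle will be the verification that the restriction to the centric family $\Obj(\Fcal^c)$ remains ample, so that the subgroup decomposition still recovers $BG$ after $p$-completion. Concretely one needs that the subgroups failing to be $\Fcal$-centric contribute nothing to the mod~$p$ homotopy colimit, and this relies on showing that their normalizers are ``absorbed'' by centralizer fibers that are $\field_2$-acyclic. Once this ampleness is secured, the two $\field_2$-equivalences above combine to yield $\poddcomplete{\realize{\Lcal_S(G)}}\simeq\poddcomplete{BG}$, completing the proof.
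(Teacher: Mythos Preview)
The paper does not prove this theorem at all: it is quoted verbatim as \cite[Theorem~9.10]{BLO-Discrete} and used as a black box, with the explicit construction of $\Lcal_S(G)$ deferred to \cite[Proposition~9.12]{BLO-Discrete}. So there is no ``paper's own proof'' to compare against; your proposal is a sketch of the argument in the cited reference, not of anything appearing here.

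That said, your outline is broadly in the right spirit but has two genuine soft spots. First, the construction you describe---choosing for each $\Fcal$-centric $P$ a normal complement $C'_G(P)\triangleleft C_G(P)$ to $Z(P)$ with $BC'_G(P)$ mod~$p$ acyclic---hides a real coherence problem: these choices must be compatible under the maps $N_G(P,Q)$ so that composition in the quotient category is well-defined, and this does not come for free. The hint in the paper (proof of Proposition~\ref{proposition:L=T}) is that \cite{BLO-Discrete} actually builds $\Lcal_S(G)$ as a \emph{pullback} of $\Fcal_S^c(G)$ along a section~$s$ of the projection $\Tcal_S^c(G)\to\Tcal_S^c(G)/\Zcal$, rather than by quotienting the transporter directly; the existence of such a section is what encodes the required coherence. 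Second, you correctly flag ampleness of the centric family as the main obstacle for the map $\beta$, but this is the heart of \cite[Theorem~9.10]{BLO-Discrete} and is substantially harder in the compact Lie setting than your sketch suggests; it is not simply a matter of non-centric subgroups having $\field_p$-acyclic centralizer fibers. (Minor point: you write $\field_2$ throughout, but the statement is for a general prime~$p$.)
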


The objects of a centric linking system associated to a fusion
system $\Fcal$ over $S$ are the $\Fcal$-centric subgroups of~$S$
(see Definition~\ref{definition: linking}).
It is helpful for computational purposes to reduce the set of objects from the set of all $\Fcal$-centric subgroups to a smaller number, in order
to obtain a more manageable category.
Let $\Lcal$ be a centric linking system associated to a saturated fusion system $\Fcal$ over a discrete \pdash toral group~$S$.
Define full subcategories $\Lcal^{cr}\subseteq \Lcal^{c\bullet}\subseteq \Lcal$, with $\Obj(\Lcal^{cr})$ given by the subgroups $P\subgroupeq  S$ that are both
$\Fcal$-centric and $\Fcal$-radical, and
$\Obj(\Lcal^{c\bullet})=\Obj(\Fcal^c)\cap \Obj(\Fcal^\bullet)$.

\begin{proposition}\label{proposition:Hfamilies}
Let $\Fcal$ be a saturated fusion system over a discrete \pdash toral group~$S$. If $\Lcal$ is a centric linking system
associated to $\Fcal$, then the inclusions of full subcategories $\Lcal^{cr}\subseteq \Lcal^{c\bullet}\subseteq \Lcal$ induce homotopy equivalences
$\poddcomplete{\realize{\Lcal^{cr}}}
     \simeq \poddcomplete{\realize{\Lcal^{c\bullet}}}
     \simeq \poddcomplete{\realize{\Lcal}}$.
\end{proposition}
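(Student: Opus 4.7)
The plan is to treat the two inclusions $\Lcal^{cr}\subseteq\Lcal^{c\bullet}$ and $\Lcal^{c\bullet}\subseteq\Lcal$ separately. The outer inclusion will be handled by a deformation retraction built from the bullet functor of Proposition~\ref{proposition: bullet}, giving a genuine homotopy equivalence before $p$-completion. The inner inclusion will be handled by peeling off non-radical isomorphism classes of objects one at a time, using that $\Lcal^{c\bullet}$ has only finitely many such classes (Proposition~\ref{proposition:FiniteCentricRadicals}) together with Quillen's Theorem~A and a mod~$p$ acyclicity argument based on the normal $p$-subgroup provided by failure of radicality.

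For $\Lcal^{c\bullet}\subseteq\Lcal$, I would lift $(\whatever)^\bullet\colon\Fcal\to\Fcal$ to a functor $\Lcal\to\Lcal^{c\bullet}$ that retracts the inclusion. For each $\Fcal$-centric $P$, the subgroup inclusion $P\subgroupeq P^\bullet$ is a morphism in $\Fcal^c$, and by the surjectivity part of Definition~\ref{definition: linking}(i) it lifts to some $\tilde\iota_P\in\Hom_\Lcal(P,P^\bullet)$. Given $\phi\in\Hom_\Lcal(P,Q)$, the combination of surjectivity and freeness of the $Z(P^\bullet)$-action in Definition~\ref{definition: linking}(i) produces a unique $\phi^\bullet\in\Hom_\Lcal(P^\bullet,Q^\bullet)$ lifting $\pi(\phi)^\bullet$ and satisfying $\phi^\bullet\circ\tilde\iota_P=\tilde\iota_Q\circ\phi$. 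This defines the retraction, and the morphisms $\tilde\iota_P$ assemble into a natural transformation from $\id_\Lcal$ to the composite $\Lcal\to\Lcal^{c\bullet}\hookrightarrow\Lcal$. Geometric realization converts this natural transformation into the desired homotopy, giving an equivalence even before $p$-completion.

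For $\Lcal^{cr}\subseteq\Lcal^{c\bullet}$, I would order the finitely many $\Fcal$-conjugacy classes in $\Obj(\Lcal^{c\bullet})\setminus\Obj(\Lcal^{cr})$ (say with smaller order first) and remove them one at a time. At a typical stage, let $[P]$ be the class removed and $\Lcal_-\subseteq\Lcal_+$ the corresponding subcategories. To show that $\realize{\Lcal_-}\hookrightarrow\realize{\Lcal_+}$ is a mod~$p$ equivalence, I would apply the mod~$p$ form of Quillen's Theorem~A: for $Q\in\Lcal_-$ the overcategory $\Lcal_-\downarrow Q$ has $(Q,\id_Q)$ as a terminal object, while for $Q$ in the removed class $[P]$ one must show that the overcategory has mod~$p$ acyclic nerve. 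This is where failure of radicality enters: a nontrivial normal $p$-subgroup $K$ of $\Out_\Fcal(P)$ pulls back through $\pi$ to a normal $p$-subgroup of $\Aut_\Lcal(P)$ that acts on the overcategory by post-composition, and the usual Borel-construction machinery for a normal $p$-subgroup produces the required acyclicity.

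The main obstacle will be precisely this final acyclicity check: setting up the overcategory as a Borel construction (or homotopy orbit space) for an action of a group containing the pulled-back $K$ as a normal $p$-subgroup, and exploiting that normal $p$-subgroup to conclude mod~$p$ acyclicity. Such arguments appear in the finite-group setting in the work of Broto--Levi--Oliver and Libman, and the reduction from $\Lcal$ to the finite category $\Lcal^{c\bullet}$ via the bullet functor is exactly what makes those techniques applicable in the discrete $p$-toral setting.
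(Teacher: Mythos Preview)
Your proposal is correct in outline, but it is worth noting that the paper's own proof argues nothing directly: it simply cites \cite[Proposition~4.5]{BLO-Discrete} for the equivalence $\realize{\Lcal^{c\bullet}}\simeq\realize{\Lcal}$ (before completion) and \cite[Corollary~A.10]{BLO-LoopSpaces} for $\poddcomplete{\realize{\Lcal^{cr}}}\simeq\poddcomplete{\realize{\Lcal}}$. Your two-step sketch is essentially a reconstruction of what those cited proofs do. The bullet-functor retraction you describe is exactly the mechanism behind \cite[Proposition~4.5]{BLO-Discrete}, and your inductive removal of non-radical conjugacy classes using the normal $p$-subgroup furnished by failure of radicality is the argument underlying \cite[Corollary~A.10]{BLO-LoopSpaces} (and its finite predecessor \cite[Theorem~3.5]{BCGLO}). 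So rather than taking a different route, you have recovered the substance of arguments that the paper outsources; the one caveat is that the final acyclicity step you flag as the ``main obstacle'' really does require the full apparatus of those references, and your Borel-construction sketch would need to be made precise along the lines given there.
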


\begin{proof}
Given a centric linking system $\Lcal$ associated to~$\Fcal$,
it is shown in \cite[Proposition 4.5]{BLO-Discrete}
that restriction to the full subcategory $\Lcal^{c\bullet}$
induces a homotopy equivalence of nerves
$\realize{\Lcal^{c\bullet}}\simeq \realize{\Lcal}$.
Finally \cite[Corollary~A.10]{BLO-LoopSpaces} shows that the inclusion $\Lcal^{cr}\subset \Lcal$ induces an equivalence on \pdash completed nerves.
\end{proof}

\begin{remark*}\label{reamark:finiteHfamilies}
If $\Fcal$ is a saturated fusion system over a \emph{finite} \pdash group, then for any $P\subgroupeq S$ we have $P^\bullet=P$, and in this case Proposition~\ref{proposition:Hfamilies} was proved in \cite[Theorem~3.5]{BCGLO}.
\end{remark*}

In general, a centric linking system for a group $G$ is constructed as a subquotient of the transporter category~$\Tcal_S(G)$ (Definition~\ref{definition:transporter}).
We use the explicit construction in \cite[Proposition 9.12]{BLO-Discrete} to prove the
proposition below. The advantage
for explicit computations is that it allows a description of the classifying space directly in terms of a small subcategory of the transporter category.

Let $\Tcal^{cr}_S(G)$ denote the full subcategory of the transporter category $\Tcal_S(G)$ whose objects are $\Fcal_S(G)$-centric $\Fcal_S(G)$-radical subgroups.

\begin{proposition}\label{proposition:L=T}
Let $G$ be a compact Lie group.
Assume that for all $\Fcal_S(G)$-centric $\Fcal_S(G)$-radical subgroups $P\subgroupeq S$, we know that $C_{G}(P)$ is a finite \pdash group and also that $C_G(P)=Z(P)$.
Then there is a functor $\sbar\colon \Lcal_S^{cr}(G)\rightarrow \Tcal_S^{cr}(G)$ that is an isomorphism of categories.
\end{proposition}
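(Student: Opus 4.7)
The plan is to construct $\sbar$ as the inverse of the canonical quotient functor $\tau\colon\Tcal_S^{c}(G)\to\Lcal_S(G)$ coming from the explicit model of \cite[Proposition~9.12]{BLO-Discrete}. In that construction, $\Lcal_S(G)$ has the same objects as $\Fcal_S(G)^c$ and $\tau$ is the identity on objects; on each morphism set, $\tau_{P,Q}$ is a surjection $N_G(P,Q)\twoheadrightarrow\Hom_{\Lcal_S(G)}(P,Q)$ with fibers given by the orbits of a specific subgroup $K(P)\subseteq C_G(P)$, chosen so that axiom~(i) of Definition~\ref{definition: linking} is enforced, i.e., so that the further quotient of $\Hom_{\Lcal_S(G)}(P,Q)$ by $Z(P)$ recovers $\Hom_{\Fcal_S(G)}(P,Q)$.

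The main step is a fiberwise analysis. Both $\Hom_{\Lcal_S(G)}(P,Q)$ and $N_G(P,Q)$ project to $\Hom_{\Fcal_S(G)}(P,Q)$, the former via the map $\pi$ from Definition~\ref{definition: linking} with $Z(P)$-torsor fibers, the latter via $g\mapsto c_g$ with $C_G(P)$-torsor fibers. The functor $\tau$ is compatible with both projections, so restricted to each fiber it is a surjection from a $C_G(P)$-torsor to a $Z(P)$-torsor, induced by the quotient $C_G(P)\twoheadrightarrow C_G(P)/K(P)\cong Z(P)$. Under our hypotheses---with $C_G(P)$ a finite $p$-group forcing $K(P)$ to have no positive-dimensional or $p'$ contribution, and $C_G(P)=Z(P)$ killing the remaining finite $p$-group contribution---the subgroup $K(P)$ is trivial, and the fiberwise surjection is a bijection. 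Hence $\tau_{P,Q}$ is bijective on each morphism set.

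Setting $\sbar\definedas\tau^{-1}$ then yields a map on each hom-set; it is the identity on objects since $\tau$ is, and functoriality (compatibility with composition and identities) is inherited from the functoriality of $\tau$ together with pointwise bijectivity, so $\sbar\colon\Lcal_S^{cr}(G)\to\Tcal_S^{cr}(G)$ is an isomorphism of categories by construction. The main obstacle in executing this plan is unpacking \cite[Proposition~9.12]{BLO-Discrete} carefully enough to identify $K(P)$ explicitly and verify that both halves of our hypothesis are exactly what is needed to force $K(P)=1$; once that identification is made, the fiberwise bijectivity of $\tau_{P,Q}$ and the isomorphism statement follow immediately.
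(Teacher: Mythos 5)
Your overall strategy---invoke the explicit model of \cite[Proposition~9.12]{BLO-Discrete} and show that the hypotheses collapse the difference between $C_G(P)$ and $Z(P)$, so that $\Lcal^{cr}_S(G)$ and $\Tcal^{cr}_S(G)$ have the same objects and morphism sets---is the paper's strategy. However, the step you defer as the ``main obstacle'' is exactly where the plan as written breaks down. You describe the construction as a quotient functor $\tau\colon \Tcal^{c}_S(G)\to\Lcal_S(G)$ whose fibers on morphism sets are orbits of subgroups $K(P)\subseteq C_G(P)$ with $C_G(P)/K(P)\cong Z(P)$. That is the finite-group construction of \cite{BLO-Finite}, where $C_G(P)=Z(P)\times C'_G(P)$ with $C'_G(P)$ the canonical $p'$-complement. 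It does not carry over to discrete \pdash toral fusion systems of compact Lie groups: an $\Fcal$-centric subgroup can have centralizer far larger than $Z(P)\times(\text{finite } p'\text{-group})$, with no canonical complement. In the paper's own example, the discrete torus $T\subgroupeq S$ of $\SU(2)$ is $\Fcal$-centric with $C_{\SU(2)}(T)=S^1$ while $Z(T)=\ZpinfinitySpecific{2}$, so there is no distinguished $K(T)$, and the construction in \cite{BLO-Discrete} does not proceed by choosing one. Instead, \cite[Proposition~9.12]{BLO-Discrete} builds $\Lcal_S(G)$ as a pullback of $\Tcal^{c}_S(G)\to\Tcal^{c}_S(G)/\Zcal \leftarrow \Fcal^{c}_S(G)$ along a chosen section $s$ of the projection $\Tcal^{c}_S(G)/\Zcal\to\Fcal^{c}_S(G)$ (whose ``kernel'' at $P$ is $C_G(P)/Z(P)$); this hands you a comparison functor in the direction $\Lcal\to\Tcal$, not a quotient $\Tcal\to\Lcal$ to be inverted.

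Once this is corrected, the repair is short and is precisely the paper's argument: on $\Fcal$-centric $\Fcal$-radical subgroups the hypotheses give $C_G(P)=Z(P)$, so the morphism sets of $\Tcal^{cr}_S(G)/\Zcal$ are $N_G(P,Q)/Z(P)=N_G(P,Q)/C_G(P)=\Hom_{\Fcal}(P,Q)$; hence the projection to $\Fcal^{cr}_S(G)$ is the identity, one may take $s=\id$ there, and the pullback leg $\sbar\colon\Lcal^{cr}_S(G)\to\Tcal^{cr}_S(G)$ is bijective on objects and on every hom-set. Your fiberwise comparison of the $C_G(P)$-torsor $N_G(P,Q)$ with the $Z(P)$-torsor $\Hom_{\Lcal}(P,Q)$ over $\Hom_{\Fcal}(P,Q)$ is the right intuition for why this works, but it must be routed through the section $s$ (using, if your model differs from the one being compared, the uniqueness of linking systems, Theorem~\ref{theorem: LL-ExistenceL}) rather than through a subgroup $K(P)$ that does not exist in this setting.
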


\begin{proof}
We refer to the explicit construction of the centric linking system in \cite[Proposition 9.12]{BLO-Discrete}. Under our hypothesis, when restricted to $\Fcal$-centric $\Fcal$-radical discrete \pdash toral subgroups,
one can take $s=\id$ in the pullback diagram
\[
\diagram
\Lcal_S^{cr}(G) \rto^-{\sbar} \dto_{ }
      & \Tcal_S^{cr}(G) \dto^{ }
\cr \Fcal^{cr}_S(G) \rto^-{\id} & \Tcal^{cr}_S(G)/\Zcal,
\enddiagram
\]
where $\Zcal(P)=Z(P)$. This is because $C_G(P)=Z(P)$ is a finite \pdash group and $C_G(P)$ does not contain elements of finite order prime to~$p$.
Then $\sbar$ gives the desired isomorphism of categories.
\end{proof}

Lastly, we tie back directly to the goal of obtaining information about~$\poddcomplete{BG}$. Recall that $\Bcal G$ is the topological category with one object and the group $G$ as morphisms. There is a functor $ \Tcal_S(G) \rightarrow \Bcal G$ defined by sending $g\in N_G(P,Q)$ to $g\in G$ for all $P,Q\subgroupeq G$.

\begin{corollary}\label{corollary:T=BG}
Under the hypothesis of Proposition~\ref{proposition:L=T}, the functor
$ \Tcal_S(G) \rightarrow \Bcal G$ induces an equivalence of \pdash completed nerves,
$\poddcomplete{\realize{\Tcal^{cr}_S(G)}}\simeq \poddcomplete{BG}$.
\end{corollary}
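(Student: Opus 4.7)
The plan is to chain together the three preceding results and then verify that the resulting equivalence is induced by the functor named in the corollary. From Proposition~\ref{proposition:L=T}, the hypothesis supplies an isomorphism of categories $\sbar\colon \Lcal^{cr}_S(G) \xrightarrow{\cong} \Tcal^{cr}_S(G)$, hence a homeomorphism of nerves. Proposition~\ref{proposition:Hfamilies} applied to the centric linking system $\Lcal_S(G)$ gives
$\poddcomplete{\realize{\Lcal^{cr}_S(G)}} \simeq \poddcomplete{\realize{\Lcal_S(G)}}$, and Theorem~\ref{theorem:Lpcom=BGpcom} furnishes $\poddcomplete{\realize{\Lcal_S(G)}} \simeq \poddcomplete{BG}$. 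Composing yields the zigzag
\[
\poddcomplete{\realize{\Tcal^{cr}_S(G)}} \xleftarrow[\cong]{\,\realize{\sbar}\,} \poddcomplete{\realize{\Lcal^{cr}_S(G)}} \xlongrightarrow{\simeq} \poddcomplete{\realize{\Lcal_S(G)}} \xlongrightarrow{\simeq} \poddcomplete{BG},
\]
which establishes the existence of a homotopy equivalence.

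To identify this composite with the map induced by the functor $\Tcal_S(G)\to \Bcal G$ restricted to $\Tcal^{cr}_S(G)$, I would unravel the explicit construction of $\Lcal_S(G)$ given in \cite[Proposition~9.12]{BLO-Discrete}. There, $\Lcal_S(G)$ sits in the pullback square over $\Fcal_S(G)$ and $\Tcal_S(G)/\Zcal$ that is reproduced in the proof of Proposition~\ref{proposition:L=T}, and the equivalence $\realize{\Lcal_S(G)} \to BG$ underlying Theorem~\ref{theorem:Lpcom=BGpcom} is induced by a natural functor $\Lcal_S(G)\to \Bcal G$ that factors through the relevant quotient of $\Tcal_S(G)$. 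Under the centralizer hypothesis $C_G(P)=Z(P)$, the quotient by $\Zcal$ acts trivially on $\Fcal$-centric $\Fcal$-radical objects (this is precisely the observation allowing $s=\id$ in the proof of Proposition~\ref{proposition:L=T}), so after restricting to $\Lcal^{cr}_S(G)$ the functor to $\Bcal G$ factors as
\[
\Lcal^{cr}_S(G) \xlongrightarrow{\sbar} \Tcal^{cr}_S(G) \longrightarrow \Bcal G,
\]
where the second arrow is the restriction of $\Tcal_S(G)\to \Bcal G$. Consequently, the zigzag above is realized by this functor.

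The main obstacle is bookkeeping rather than substance: one must keep straight the several natural functors between $\Lcal_S(G)$, $\Tcal_S(G)$, and $\Bcal G$ provided by \cite[Proposition~9.12]{BLO-Discrete}, and confirm that under the identification $\sbar$ the canonical functor $\Lcal_S(G)\to \Bcal G$ used in Theorem~\ref{theorem:Lpcom=BGpcom} coincides on $\Fcal$-centric $\Fcal$-radical objects with the restriction of $\Tcal_S(G)\to \Bcal G$. Once this compatibility is confirmed, the corollary follows immediately from the three cited results.
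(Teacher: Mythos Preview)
Your proposal is correct and follows essentially the same route as the paper's proof, which simply cites Proposition~\ref{proposition:Hfamilies}, Proposition~\ref{proposition:L=T}, and the \emph{proof} of \cite[Theorem~9.10]{BLO-Discrete}. Your second paragraph, unpacking why the composite equivalence is induced by the named functor $\Tcal_S(G)\to\Bcal G$, is exactly the content hidden in the paper's phrase ``the proof of'' (rather than merely the statement of) Theorem~\ref{theorem:Lpcom=BGpcom}; so you have made explicit what the paper leaves implicit.
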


\begin{proof}
The corollary is a consequence of Proposition~\ref{proposition:Hfamilies}, Proposition~\ref{proposition:L=T},
and the proof of \cite[Theorem~9.10]{BLO-Discrete}.
\end{proof}


\section{The fusion system of $SU(2)$ at $p=2$}
\label{section:2local}

Throughout this section, let $G=\SU(2)$, and let $p=2$. In the first part of the section, we compute the category $\Fcal^{cr}_S(G)$, that is the full subcategory of $\Fcal_S(G)$ (Definition~\ref{definition:fusionGLie})
consisting of $\Fcal_S(G)$-centric, $\Fcal_S(G)$-radical subgroups (Definition~\ref{definition:Fcentric-Fradical}). We compute from first principles to illustrate this example.
In the second part of the section, we compute the associated linking system, again from first principles, using
Proposition~\ref{proposition:L=T}.
We will use the computation in Section~\ref{section: SU(2) SO(3)}, to interpret the
mod~$2$ homotopy decompositions of $\BSU(2)$ and $\BSO(3)$ due to Dwyer, Miller and Wilkerson~\cite{DMW1} as ``normalizer decompositions."

\smallskip
We set up notation for the matrix
representation we use. Elements of $\SU(2)$ are matrices of the form
$
\twobytwo{a}{b}{-\bbar}{\abar}
$
where $a,b\in\complexes$ and $|a|^2+|b|^2=1$.
The maximal torus has rank~$1$, and can be represented as
\begin{equation}     \label{eq: torus presentation SU}
\Tbold =\left\{\twobytwo{a}{0}{0}{\abar}\suchthat{\Largestrut a\in S^1}
        \right\}
       \cong S^{1}.
\end{equation}

In order to work with the fusion system associated to $\SU(2)$ as in
Definition~\ref{definition:fusionGLie}, we need to describe a maximal discrete
\twodash toral subgroup of $\SU(2)$. The normalizer of the maximal torus, which can be found by direct computation with matrices, is generated by the torus itself, together with one additional element~$\imatrix$ of order~$4$:
\begin{equation}   \label{eq: normalizer in SU(2)}
\Nbold_{\SU(2)}\Tbold=\left\langle\, \strut \Tbold, \imatrix\,\right\rangle,
    \mbox{\quad where $\imatrix=\twobytwo{0}{1}{-1}{0}.$}
\end{equation}
The element $\imatrix$ acts on $\Tbold$
by complex conjugation of the entry~$a$ in~\eqref{eq: torus presentation SU}. There is an extension
\begin{equation}   \label{eq: extension for topological}
1\longrightarrow \Tbold \longrightarrow \Nbold_{\SU(2)}\Tbold
        \longrightarrow  \integers/2\longrightarrow 1.
\end{equation}
We conclude from this calculation that the full normalizer of the maximal torus of
$\SU(2)$ is already a \twodash toral group, with no need to look for a maximal
\twodash toral subgroup. Note that the quotient in \eqref{eq: extension for topological}
is represented by~$\ibold$,
and the extension is not split,
since all lifts of the nontrivial element of $\integers/2$
to $\Nbold_{\SU(2)}\Tbold$ have order~$4$.

Next we discretize. Let $T=\ZpinfinitySpecific{2}\subgroupeq\Tbold$ be the set of \twodash torsion elements, which is dense in~$\Tbold$. The group
$S\definedas \left\langle\, \strut T, \imatrix\,\right\rangle
\subgroupeq  \Nbold_{\SU(2)}\Tbold$,
is a maximal discrete \twodash toral subgroup of~$\SU(2)$, given by
the extension
\begin{equation}   \label{eq: ses for S}
1\longrightarrow \ZpinfinitySpecific{2} \longrightarrow S
        \longrightarrow  \integers/2\longrightarrow 1,
\end{equation}
where again $\integers/2$ is generated by the coset of~$\imatrix$.
Our goal is to compute a fusion system over $S$ that is adequate to yield the \twodash completion of~$\BSU(2)$. For the rest of this section, let
$\Fcal \definedas \Fcal_S(SU(2))$ be the fusion system over $S$ given by Definition~\ref{definition:fusionGLie}, with objects given by all
subgroups $P$ of~$S$ and morphisms given by conjugations in~$\SU(2)$. In order to use Proposition~\ref{proposition:Hfamilies},
we would like to describe the $\Fcal$-centric, $\Fcal$-radical subgroups of~$S$.

To narrow down which subgroups of $S$ could be $\Fcal$-centric and $\Fcal$-radical, we use
the bullet functor from Proposition~\ref{proposition: bullet} and
compute the collection
$\Fcal^{\bullet}\definedas \{ P^\bullet \suchthat{P\subgroupeq  S}\}$. This collection contains all the $\Fcal$-centric, $\Fcal$-radical subgroups of~$S$
(Proposition~\ref{proposition:FiniteCentricRadicals}), though it may also contain other groups.
There are two possibilities for subgroups $P\subgroupeq S$:
either $P\subgroupeq T=\ZpinfinitySpecific{2}$,
or $P$ is generated by a subgroup of $T$ together with~$\imatrix$.
It turns out that putting in enough elements of~$\ZpinfinitySpecific{2}$ causes
the bullet construction to put in the rest, as indicated in the lemma below.
Recall that for any $P\subgroupeq  S$, the notation $\Aut_\Fcal(P)$ denotes the group of automorphisms of $P$ in the fusion category, which in this case means those induced by conjugation in~$\SU(2)$. Then $W=\Aut_\Fcal(T)\cong\integers/2$.

\begin{lemma}    \label{lemma: SU2 bullets}
If $P$ contains the subgroup $\integers/8\subgroup T$, then $P^{\bullet}$ contains~$T$.
\end{lemma}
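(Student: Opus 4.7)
The plan is to apply the definition of the bullet construction directly, since everything about the Weyl group action on $T$ has been worked out already in the matrix description of $S$. Recall from Proposition~\ref{proposition: bullet} that $P^\bullet = P\cdot I(P^{[m]})_0$, where $p^m$ is the exponent of $S/T$ and $P^{[m]}=\langle g^{p^m}\mid g\in P\rangle\subgroupeq P\cap T$. In the present situation, the extension \eqref{eq: ses for S} shows that $S/T\cong\integers/2$, so $p=2$ and $m=1$. Thus $P^{[1]}=\langle g^2\mid g\in P\rangle$, and the goal is to show that under the hypothesis $\integers/8\subgroupeq P$, the subgroup $I(P^{[1]})_0$ of $T$ equals all of~$T$.

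First I would verify that $P^{[1]}$ contains a copy of $\integers/4$ inside $T$. The hypothesis gives a cyclic subgroup $\integers/8\subgroupeq T=\ZpinfinitySpecific{2}$, and squaring a generator of $\integers/8$ produces an element of order~$4$; hence $P^{[1]}\supseteq\integers/4\subgroupeq T$. (One should also note in passing that squaring an element outside~$T$, i.e.\ an element of the form $t\imatrix$, contributes only the element $-\Id$ of order $2$, using $\imatrix t\imatrix^{-1}=t^{-1}$ and $\imatrix^2=-\Id$; this contribution is already absorbed.)

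Next I would compute $C_W(P^{[1]})$. The group $W=\Aut_\Fcal(T)$ is generated by the class of~$\imatrix$ from \eqref{eq: normalizer in SU(2)}, which acts on $T$ by inversion $t\mapsto t^{-1}$. Since $P^{[1]}$ contains an element of order~$4$, and inversion is not the identity on any nontrivial subgroup of $\ZpinfinitySpecific{2}$ containing elements of order $\geq 3$, the nontrivial element of $W$ does not restrict to the identity on $P^{[1]}$. Therefore $C_W(P^{[1]})$ is trivial, and $I(P^{[1]})=T^{C_W(P^{[1]})}=T$.

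Finally, since $T$ itself is a discrete $2$-torus, its identity component is $T_0=T$. Consequently $I(P^{[1]})_0=T$, and $P^\bullet=P\cdot I(P^{[1]})_0\supseteq T$, as required. I do not expect any significant obstacles here; the only point requiring a brief verification is the explicit action of $\imatrix$ on $T$, which has already been recorded in~\eqref{eq: normalizer in SU(2)}.
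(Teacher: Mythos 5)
Your proof is correct and follows essentially the same route as the paper: take $m=1$, observe that squaring the $\integers/8$ gives $\integers/4\subgroupeq P^{[1]}$, note that the nontrivial element of $W$ acts by inversion and hence does not centralize $P^{[1]}$, so $I(P^{[1]})=T$ and $P^\bullet\supseteq T$. Your justification of $m=1$ via the exponent of $S/T$ (rather than the order of $W$, which happens to coincide here) is, if anything, the more careful reading of the bullet construction.
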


\begin{proof}
We follow the construction in Proposition~\ref{proposition: bullet} and compute the identity component of~$P^{\bullet}$. The order of
the Weyl group $W=\Aut_\Fcal(T)$ is~$2^{1}$,
so we first look at~$P^{[1]}$, the subgroup of $P$ generated by squares of elements. Since $\integers/8\subgroupeq P$, we find that $\integers/4\subgroupeq P^{[1]}$.
The group $W\cong\integers/2$ acts on $T$ by complex conjugation, so it has the nontrivial action on~$\integers/4$.
As a result, only the identity element of $W$ centralizes~$P^{[1]}$, and the fixed set of $\id\in W$ is all of~$T$. Hence $\Pbullet=P\cdot T$ contains~$T$.
\end{proof}

As the quaternion group will appear shortly, we set corresponding notation:
let $\jmatrix=\twobytwo{i}{0}{0}{-i}$, where $i=\sqrt{-1}$. Observe that $\jmatrix$ has order~$4$, and
$\jmatrix\in T$ by taking $a=i$ in~\eqref{eq: torus presentation SU}.

\begin{lemma}    \label{lemma: radical possibilities}
If $P$ is a proper $\Fcal$-centric and $\Fcal$-radical subgroup of~$S$, then
$P$ is conjugate in $S$ to one of the following groups, where the left side of each extension is contained in the discrete torus~$T=\ZpinfinitySpecific{2}$
and the right side is represented by~$\imatrix$.
\begin{enumerate}[(i)]
\item \label{item: nonsurviving option}
$P=\langle\,\imatrix\,\rangle$, which is the nontrivial extension
\[
1\rightarrow\integers/2\rightarrow P\rightarrow\integers/2\rightarrow 1.
\]
\item \label{item: Q extension}
$P=\left\langle\,\jmatrix, \,\imatrix\, \right\rangle$, which is an extension
\[
1\rightarrow\integers/4\rightarrow P\rightarrow\integers/2\rightarrow 1.
\]
\end{enumerate}
\end{lemma}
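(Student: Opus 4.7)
The plan is to combine Proposition~\ref{proposition:FiniteCentricRadicals} with Lemma~\ref{lemma: SU2 bullets} to limit the possibilities, and then perform a case analysis on whether $P$ is contained in the discrete torus~$T$ or not.

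By Proposition~\ref{proposition:FiniteCentricRadicals}, we have $P = P^\bullet$. If $P$ contained $\integers/8\subgroup T$, then Lemma~\ref{lemma: SU2 bullets} would give $T \subgroupeq P$, and since $[S:T]=2$ and $P$ is proper this would force $P = T$. I rule out $P \subgroupeq T$ altogether: since $T$ is abelian, $T \subgroupeq C_S(P)$, and $\Fcal$-centricity forces $T\subgroupeq P$, hence $P = T$; but $\Out_\Fcal(T) = W \cong \integers/2$ is itself a nontrivial normal $2$-subgroup, so $T$ is not $\Fcal$-radical. Thus $P \cap T \subgroupeq \integers/4$ and $P \not\subgroupeq T$.

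Consequently $P$ surjects onto $S/T \cong \integers/2$, so $P$ contains some $g = t\imatrix$ with $t \in T$, and $P = \langle P_0, g\rangle$ where $P_0 = P \cap T$. The key normalization is to $S$-conjugate $P$ so that $g = \imatrix$. For $s \in T$, the Weyl relation $\imatrix s \imatrix^{-1} = s^{-1}$ yields
\[
s(t\imatrix)s^{-1} \;=\; s^2 t\, \imatrix,
\]
and since $T \cong \ZpinfinitySpecific{2}$ is $2$-divisible one may solve $s^2 = t^{-1}$ in $T$. Conjugation by this $s$ fixes $P_0$ pointwise (as $T$ is abelian), so after the conjugation $P = \langle P_0, \imatrix\rangle$. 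Since $\imatrix^2 = -I \in P_0$, the cyclic group $P_0 \subgroupeq \integers/4$ has order $2$ or $4$: if $|P_0| = 2$ then $P_0 = \langle -I\rangle$ and $P = \langle \imatrix\rangle$, giving~\itemref{item: nonsurviving option}; if $|P_0| = 4$ then $P_0 = \langle \jmatrix\rangle$ and $P = \langle \jmatrix, \imatrix\rangle$, giving~\itemref{item: Q extension}.

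The main technical point is the normalization step: it is precisely the $2$-divisibility of $\ZpinfinitySpecific{2}$, a feature absent in the finite $2$-group setting, that absorbs the free parameter $t$ into $S$-conjugation and produces a uniform set of representatives. The remaining steps are combinatorial bookkeeping with the extension $1\to T\to S\to \integers/2\to 1$ and the $\Fcal$-radical condition on~$T$.
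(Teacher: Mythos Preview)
Your proof is correct and follows essentially the same strategy as the paper: rule out $P\subgroupeq T$ via centricity and radicality, use the bullet construction together with Lemma~\ref{lemma: SU2 bullets} to bound $P\cap T$, and then normalize the off-torus generator. Your normalization step via $2$-divisibility of $\ZpinfinitySpecific{2}$ is in fact more careful than the paper's one-line claim that ``all elements of $S$ outside $T$ are conjugate in~$S$, because $[S:T]=2$,'' which silently uses exactly the same divisibility argument you spell out.
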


\begin{proof}
If $P$ is a proper subgroup of~$T$, then
it is not $\Fcal$-centric, because $T$ centralizes~$P$. If $P=T$,
then $C_{S}(T)=T$, so $T$ is $\Fcal$-centric. However, $T$ is not $\Fcal$-radical (Definition~\ref{definition:Fcentric-Fradical}\itemref{item: discrete Fradical}):
automorphisms of $T$ are given by conjugations in~$S$, so
\[
\Out_\Fcal(T)=\Aut_{\Fcal}(T)/\Aut_{T}(T)=\integers/2,
\]
generated by $\imatrix$ acting on $T$ by complex
conjugation. Since $\Out_\Fcal(T)$ is itself a \twodash group, $T$ is not $\Fcal$-radical. Hence no subgroup $P\subgroupeq T$ can be both $\Fcal$-centric and $\Fcal$-radical.

A useful general tool for the remaining cases is given by Proposition~\ref{proposition:FiniteCentricRadicals}, which tells us that
the image of the bullet construction includes all subgroups of $S$ that are both $\Fcal$-centric and $\Fcal$-radical. Hence if
$P^\bullet$ contains $P$ as a proper subgroup, then $P$ cannot be both $\Fcal$-centric and $\Fcal$-radical.

We have already eliminated subgroups of~$T$ as possibilities for subgroups that are both $\Fcal$-centric and $\Fcal$-radical. So suppose $P$ is a proper subgroup of~$S$ that is not contained in~$T$.
Then $P\cap T$ is a proper subgroup of~$T$, and we need to know how big it can be.
If $\integers/8\subgroupeq  P\cap T$, then
by Lemma~\ref{lemma: SU2 bullets} we know $T\subgroupeq P^\bullet$, and by construction
$P\subgroupeq P^{\bullet}$ as well.
Because $P$ is not contained in~$T$, we conclude that
$P^{\bullet}=S$. Since $P$ is a proper subgroup of $S$ by assumption,
we conclude that $P$ is properly contained in~$P^\bullet$, so $P$ is not $\Fcal$-centric and $\Fcal$-radical.
Further, $P\cap T=0$ is not possible, because all elements of $S$ outside the torus have squares that are in the torus.

The only remaining possibilities are for $\Fcal$-centric and $\Fcal$-radical subgroups $P$ to have $P\cap T=\integers/2$ or $P\cap T=\integers/4$, which are
the possibilities \itemref{item: nonsurviving option} and
\itemref{item: Q extension} in the statement. Note that all elements of $S$ that are not contained in~$T$ are conjugate in~$S$, because $[S:T]=2$; as a result,
there is only one conjugacy class of each type of subgroup.
\end{proof}

We can readily eliminate option~\itemref{item: nonsurviving option} of Lemma~\ref{lemma: radical possibilities}.

\begin{lemma} \label{lem:Z/4-not-radical}
The subgroup $P=\langle\,\imatrix\,\rangle\cong\integers/4$ is not $\Fcal$-radical.
\end{lemma}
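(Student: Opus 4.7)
The plan is to compute $\Out_\Fcal(P)$ directly and observe that it is itself a nontrivial $2$-group, hence contains a nontrivial normal $2$-subgroup (namely itself).

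First, since $P \cong \integers/4$ is abelian, $Z(P) = P$ and hence $\Aut_P(P) = P/Z(P)$ is trivial. Therefore $\Out_\Fcal(P) = \Aut_\Fcal(P)$. Moreover $\Aut(\integers/4) \cong \integers/2$, so $\Aut_\Fcal(P)$ is either trivial or equal to the full automorphism group $\Aut(P)$. The problem reduces to showing that the inversion automorphism $\imatrix \mapsto \imatrix^{-1} = -\imatrix$ is induced by some conjugation in $\SU(2)$.

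The key step is therefore to exhibit an element of $\SU(2)$ that conjugates $\imatrix$ to $-\imatrix$. The natural candidate is $\jmatrix = \twobytwo{i}{0}{0}{-i}$, which already lies in $T\subgroupeq S$ (take $a=i$ in the parametrization \eqref{eq: torus presentation SU}) and represents a nontrivial Weyl element; a routine $2\times 2$ matrix calculation, equivalent to the standard quaternionic relation $\jmatrix\,\imatrix = -\imatrix\,\jmatrix$, confirms that $\jmatrix\,\imatrix\,\jmatrix^{-1} = -\imatrix$. Hence $\Aut_\Fcal(P) \cong \integers/2$, so $\Out_\Fcal(P) \cong \integers/2$ is a nontrivial normal $2$-subgroup of itself, and by Definition~\ref{definition:Fcentric-Fradical}\itemref{item: discrete Fradical} the subgroup $P$ is not $\Fcal$-radical.

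I do not anticipate any real obstacle here: once one records that $\Aut_P(P)$ is trivial because $P$ is abelian, the argument collapses to a single conjugation check, which $\jmatrix$ carries out via the quaternionic relations. One could alternatively invoke Remark~\ref{remark: S is radical}-style reasoning or the bullet construction, but the direct identification of $\Out_\Fcal(P)$ is both shorter and more informative for the computations to follow.
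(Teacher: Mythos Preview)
Your proposal is correct and matches the paper's own proof essentially line for line: both note that $\Aut(\integers/4)\cong\integers/2$, realize the nontrivial automorphism by conjugation with $\jmatrix$, and conclude $\Out_\Fcal(P)=\Aut_\Fcal(P)\cong\integers/2$ so that $P$ is not $\Fcal$-radical. Your write-up is slightly more explicit about why $\Aut_P(P)$ is trivial (abelianness of~$P$), but the argument is the same.
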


\begin{proof}
Abstractly, there is exactly one nontrivial automorphism of~$P$, namely the one that exchanges $\imatrix$ with~$\imatrix^{3}$. This nontrivial automorphism is realized by conjugation by~$\jmatrix$.
Hence $\Out_{\Fcal}P = \Aut_{\Fcal}P=\integers/2$, and $P$ is not $\Fcal$-radical.
\end{proof}

\subsection*{The quaternionic subgroup of~$S$.}

We still need to check if the group ~\itemref{item: Q extension} in Lemma~\ref{lemma: radical possibilities} is an $\Fcal$-centric and $\Fcal$-radical subgroup of~$S$. We call this group $Q\subset\SU(2)$, defined by the following representation:

\footnotesize
\begin{equation}    \label{diagram: Q matrices}
\begin{gathered}
\begin{array}{llll}
I=\twobytwo{1}{0}{0}{1},
&\imatrix=\twobytwo{0}{1}{-1}{0},
&\jmatrix=\twobytwo{i}{0}{0}{-i},
&\kmatrix=\twobytwo{0}{-i}{-i}{0},\\
\\
-I=\twobytwo{-1}{0}{0}{-1},
&\imatrixbar=\twobytwo{0}{-1}{1}{0},
&\jmatrixbar=\twobytwo{-i}{0}{0}{i},
&\kmatrixbar=\twobytwo{0}{i}{i}{0} .
\end{array}
\end{gathered}
\end{equation}
\normalsize
The elements multiply as the quaternions do: $\imatrix\jmatrix=\kmatrix$, $\jmatrix\kmatrix=\imatrix$, and $\kmatrix\imatrix=\jmatrix$, as well as
$\imatrix^2=\jmatrix^2=\kmatrix^2=-I$.
Straightforward computation establishes the following lemma.

\begin{lemma}\hfill
\begin{enumerate}[(i)]
\item The center of $Q$ is $\{I,-I\}$.
\item There are six elements of order four: $\imatrix, \jmatrix, \kmatrix, \imatrixbar, \jmatrixbar, \kmatrixbar$.
\item The inner automorphism group of $Q$ is $\integers/2\times\integers/2$:
\[
\begin{array}{lll}
c_{\imatrix}=c_{\imatrixbar}\qquad
& c_{\jmatrix}=c_{\jmatrixbar}\qquad
&c_{\kmatrix}=c_{\kmatrixbar}\\
\imatrix\mapsto \imatrix
& \imatrix\mapsto \imatrix^{-1}
& \imatrix\mapsto \imatrix^{-1} \\
\jmatrix\mapsto  \jmatrix^{-1}
& \jmatrix\mapsto \jmatrix
& \jmatrix\mapsto \jmatrix^{-1} \\
\kmatrix\mapsto \kmatrix^{-1}
& \kmatrix\mapsto \kmatrix^{-1}
& \kmatrix\mapsto \kmatrix.
\end{array}
\]
\end{enumerate}
\end{lemma}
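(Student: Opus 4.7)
The plan is to verify all three parts by direct computation with the matrix representation recorded in~\eqref{diagram: Q matrices}, or equivalently using the quaternion relations $\imatrix\jmatrix=\kmatrix$, $\jmatrix\kmatrix=\imatrix$, $\kmatrix\imatrix=\jmatrix$, and $\imatrix^{2}=\jmatrix^{2}=\kmatrix^{2}=-I$, combined with $\imatrixbar=-\imatrix$, $\jmatrixbar=-\jmatrix$, $\kmatrixbar=-\kmatrix$.

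For part (i), the scalar matrices $\pm I$ are automatically central, so the work is to eliminate the remaining six elements. For each one it suffices to exhibit a non-commuting partner: the identity $\imatrix\jmatrix=-\jmatrix\imatrix$ shows that none of $\imatrix,\imatrixbar,\jmatrix,\jmatrixbar$ is central, and $\jmatrix\kmatrix=-\kmatrix\jmatrix$ handles $\kmatrix$ and $\kmatrixbar$. For part (ii), squaring directly gives $\imatrix^{2}=\jmatrix^{2}=\kmatrix^{2}=-I$, and the same squares arise for the barred versions because negation is central; hence each of these six elements has order exactly $4$, while $I$ and $-I$ have orders $1$ and $2$ respectively, and no further elements exist since $|Q|=8$.

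For part (iii), combining (i) with the first isomorphism theorem gives $\Inn(Q)\cong Q/\{\pm I\}$, a group of order~$4$. By (ii), every element of $Q$ squares into $\{\pm I\}$, so $\Inn(Q)$ has exponent~$2$ and is therefore isomorphic to $\integers/2\times\integers/2$, with the three nontrivial cosets represented by $\imatrix$, $\jmatrix$, $\kmatrix$. Because $\imatrixbar=-\imatrix$ differs from $\imatrix$ by the central element $-I$, one has $c_{\imatrixbar}=c_{\imatrix}$ and similarly for the other bars, which accounts for the three equalities displayed in the left column of the table. The remaining entries are a finite check using the quaternion relations: for instance, $c_{\imatrix}(\jmatrix)=\imatrix\jmatrix\imatrix^{-1}=\kmatrix(-\imatrix)=-\jmatrix=\jmatrix^{-1}$ and $c_{\imatrix}(\kmatrix)=\imatrix\kmatrix\imatrix^{-1}=(-\jmatrix)(-\imatrix)=\jmatrix\imatrix=-\kmatrix=\kmatrix^{-1}$, and analogous calculations fill in the $c_{\jmatrix}$ and $c_{\kmatrix}$ rows. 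There is no real obstacle here; the only thing requiring a little care is the sign bookkeeping when reducing conjugations by $\imatrixbar,\jmatrixbar,\kmatrixbar$ back to their unbarred counterparts via the center.
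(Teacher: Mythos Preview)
Your proposal is correct and matches the paper's approach exactly: the paper simply states that ``straightforward computation establishes the following lemma,'' and your proof supplies precisely those direct verifications using the quaternion relations and the matrix representation in~\eqref{diagram: Q matrices}.
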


In order to calculate $\Aut_\Fcal(Q)$,
we begin with the abstract automorphisms of~$Q$ (i.e. not assuming that they arise from conjugation). Any automorphism must fix the center, $\{I,-I\}$, and can permute the six elements of order~$4$.

\begin{lemma}              \label{lemma: automorphism group}
The abstract automorphism group $\Aut(Q)$ is isomorphic to~$\Sigma_{4}$. The abstract outer automorphism group
of $Q$ is isomorphic to~$\Sigma_{3}$.
\end{lemma}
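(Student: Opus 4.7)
The plan is to start from the presentation $Q = \langle \imatrix, \jmatrix \mid \imatrix^2 = \jmatrix^2 = (\imatrix\jmatrix)^2 = -I \rangle$ with $-I$ central of order~$2$, so that any automorphism $\varphi$ of $Q$ is completely determined by the pair $(\varphi(\imatrix), \varphi(\jmatrix))$. A straightforward case check shows that any two distinct order-$4$ elements of $Q$ that do not lie in the same cyclic subgroup anticommute (since $\imatrix\jmatrix = -\jmatrix\imatrix$ and its analogues), and each order-$4$ element squares to $-I$; hence the defining relations are automatically preserved as soon as $\varphi(\imatrix)$ and $\varphi(\jmatrix)$ are chosen to be order-$4$ elements not generating the same cyclic subgroup. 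There are $6$ choices for $\varphi(\imatrix)$ and then $4$ choices for $\varphi(\jmatrix)$ (avoiding the two elements of $\langle \varphi(\imatrix)\rangle$), giving $|\Aut(Q)| = 24$.

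Next I would produce the identification with $\Sigma_{4}$. Let $\psi\colon \Aut(Q)\longrightarrow \Sigma_3$ be the homomorphism induced by the action on the three cyclic subgroups $\langle\imatrix\rangle,\langle\jmatrix\rangle,\langle\kmatrix\rangle$ of order~$4$. An element of $\ker\psi$ must either fix or invert each of $\imatrix,\jmatrix,\kmatrix$ individually, and the relation $\kmatrix = \imatrix\jmatrix$ forces the parity of the inversions on the three generators to be even; this leaves exactly the four maps $\mathrm{id}, c_\imatrix, c_\jmatrix, c_\kmatrix$, so $\ker\psi = \Inn(Q) \cong \integers/2 \times \integers/2$. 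Surjectivity of $\psi$ is witnessed by the two automorphisms $\varphi_{3}\colon\imatrix \mapsto \jmatrix,\ \jmatrix \mapsto \kmatrix$ (the cyclic symmetry of the quaternion relations, of order~$3$) and $\varphi_{2}\colon\imatrix\mapsto\jmatrix,\ \jmatrix\mapsto\imatrix$, which fixes $\langle\kmatrix\rangle$ as a set. Together these generate a subgroup of $\Aut(Q)$ mapping isomorphically onto~$\Sigma_3$, so the extension
\[
1 \longrightarrow \Inn(Q) \longrightarrow \Aut(Q) \stackrel{\psi}{\longrightarrow} \Sigma_{3} \longrightarrow 1
\]
splits. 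The conjugation action of this $\Sigma_3$ on $\Inn(Q)\cong V_4$ permutes the three nontrivial elements $c_{\imatrix}, c_{\jmatrix}, c_{\kmatrix}$ in the same way that $\Sigma_3$ permutes $\langle\imatrix\rangle,\langle\jmatrix\rangle,\langle\kmatrix\rangle$, namely as the faithful permutation action. Thus $\Aut(Q) \cong V_4 \rtimes \Sigma_3$ with the standard action, which is precisely~$\Sigma_4$.

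The outer automorphism statement is then immediate: $\Out(Q) = \Aut(Q)/\Inn(Q) \cong \Sigma_4/V_4 \cong \Sigma_3$, and under $\psi$ the composite $\Aut(Q) \to \Out(Q)$ is identified with $\psi$ itself. The only real hurdle is distinguishing the extension from the other groups of order~$24$ with a normal $V_4$ and quotient $\Sigma_3$; this is handled by the explicit splitting $\varphi_2,\varphi_3$ together with the observation that the induced $\Sigma_3$-action on $V_4$ is the standard permutation representation, which uniquely pins the group down to~$\Sigma_4$.
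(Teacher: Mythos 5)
Your route is genuinely different from the paper's: the paper identifies $\Aut(Q)$ geometrically with the orientation-preserving symmetries of the coordinate axes of $\reals^3$, reads off $\Sigma_4$ from the four cross-cube diagonals, and then deduces $\Out(Q)\cong\Sigma_3$ by counting together with the action on the three cyclic subgroups of order~$4$; you instead count $|\Aut(Q)|=24$ from a presentation and analyze the extension $1\to\Inn(Q)\to\Aut(Q)\xrightarrow{\psi}\Sigma_3\to 1$. Most of this is correct: the count of $24$, the computation $\ker\psi=\Inn(Q)\cong\integers/2\times\integers/2$, the surjectivity of $\psi$, and hence $\Out(Q)\cong\Sigma_3$ all stand (note the outer statement needs nothing more than this).

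The gap is in the splitting step. With your choices $\varphi_2\colon\imatrix\mapsto\jmatrix,\ \jmatrix\mapsto\imatrix$ (which forces $\kmatrix\mapsto-\kmatrix$) and $\varphi_3\colon\imatrix\mapsto\jmatrix,\ \jmatrix\mapsto\kmatrix,\ \kmatrix\mapsto\imatrix$, the composite (in either order) has order~$4$: for instance $\varphi_2\circ\varphi_3$ fixes $\imatrix$ and sends $\jmatrix\mapsto-\kmatrix$, $\kmatrix\mapsto\jmatrix$, so its square is $c_{\imatrix}$. Consequently $\langle\varphi_2,\varphi_3\rangle$ contains a nontrivial inner automorphism, hence all of $\Inn(Q)$ (conjugate $c_{\imatrix}$ by $\varphi_3$), and since it surjects onto $\Sigma_3$ it is all of $\Aut(Q)$ rather than a complement isomorphic to $\Sigma_3$. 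So the asserted splitting, on which your identification $\Aut(Q)\cong\Inn(Q)\rtimes\Sigma_3\cong\Sigma_4$ rests, is not established as written. The repair is easy: take instead $\varphi_2'\colon\imatrix\mapsto-\jmatrix,\ \jmatrix\mapsto-\imatrix$ (again $\kmatrix\mapsto-\kmatrix$); one checks $\varphi_2'$ has order~$2$ and $\varphi_2'\varphi_3\varphi_2'=\varphi_3^{-1}$, so $\langle\varphi_2',\varphi_3\rangle\cong\Sigma_3$ is an honest complement. Alternatively, no explicit complement is needed: restricted to a Sylow $2$-subgroup $\integers/2\subgroup\Sigma_3$, the module $\Inn(Q)$ with its faithful $\Sigma_3$-action is free, so $H^2(\Sigma_3;\Inn(Q))=0$ and the extension splits automatically. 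Either repair, combined with your correct observation that the quotient $\Sigma_3$ permutes $c_{\imatrix},c_{\jmatrix},c_{\kmatrix}$ faithfully, pins the group down to $\Sigma_4$ and completes the proof.
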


\begin{proof}
Any automorphism of $Q$ must fix the center,
and then permute the remaining elements of $Q$ while respecting the multiplication. The automorphism group can be realized as the orientation-preserving symmetries of the set of coordinate axes of~$\reals^3$, since those symmetries give permutations of the coordinate vectors $\pm\vec{\textit{\i}}$, $\pm\vec{\textit{\j}}$, $\pm\vec{\textit{k}}$
that respect the multiplicative relations between the elements of $Q$ with corresponding names. In turn we can see that such symmetries also correspond to symmetries of the cube with vertices $(\pm 1, \pm 1, \pm 1)$.
Finally, symmetries of the cube are in one-to-one correspondence with the permutations of the four interior, cross-cube diagonals, establishing that the abstract automorphism group of $Q$ is isomorphic to~$\Sigma_{4}$.

Since there are four inner automorphisms of~$Q$ and the full automorphism group has order~$24$, the group of outer automorphisms of $Q$ has order~$6$. In addition, the outer automorphisms act on the set of subgroups of $Q$ of order~$4$, namely $\left\{\left\langle\strut\imatrix\right\rangle,
\left\langle\strut\jmatrix\right\rangle,
\left\langle\strut\kmatrix\right\rangle\right\}$.
Every permutation can be achieved by an automorphism of~$Q$, and the lemma follows.
\end{proof}

We are considering the particular representation of the quaternions in~\eqref{diagram: Q matrices}, which we denote by~$\rho:Q\rightarrow\SU(2)$, and we want to determine if it is $\Fcal$-centric and $\Fcal$-radical.
First, we relate abstract automorphisms of $Q$ to the fusion system, $\Aut_\Fcal(Q)\subset \Aut(Q)$.

\begin{lemma} \label{lemma: Q automorphisms realized}
All abstract automorphisms of $Q$ can be realized as conjugations of~$\rho$ by elements of~$\SU(2)$. In particular, $\Aut_\Fcal(Q)\cong\Aut(Q)\cong\Sigma_{4}$,
and $\Out_\Fcal(Q)\cong\Out(Q)\cong\Sigma_{3}$.
\end{lemma}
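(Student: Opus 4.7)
The plan is to realize every abstract automorphism of $Q$ by conjugation by a concrete element of $\SU(2)$, exploiting the adjoint double cover $\SU(2)\twoheadrightarrow\SO(3)$. Since $\Aut_Q(Q)\subseteq \Aut_\Fcal(Q)\subseteq \Aut(Q)\cong\Sigma_4$ tautologically (elements of $Q$ conjugate inside $\SU(2)$), and since $\Inn(Q)\cong \integers/2\times\integers/2$ has index $6$ in $\Sigma_4$, it suffices to produce elements of $\SU(2)$ realizing a transposition and a $3$-cycle on the three order-four cyclic subgroups $\{\langle\imatrix\rangle,\langle\jmatrix\rangle,\langle\kmatrix\rangle\}$. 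The statement about $\Out_\Fcal(Q)$ then follows by quotienting by the already-realized inner automorphism group.

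The key conceptual input is that the adjoint action of $\SU(2)$ on its imaginary part $\mathrm{Im}(\mathbb{H})\cong \reals^3$ is exactly the standard double cover $\SU(2)\twoheadrightarrow\SO(3)$. Under the matrix identification in \eqref{diagram: Q matrices}, the six order-four elements $\pm\imatrix,\pm\jmatrix,\pm\kmatrix$ of $Q$ correspond to the six unit coordinate vectors $\pm\vec{\imath},\pm\vec{\jmath},\pm\vec{k}$. By Lemma~\ref{lemma: automorphism group}, $\Aut(Q)\cong\Sigma_4$ is realized as the rotational symmetry group of the cube with vertices $(\pm 1,\pm 1,\pm 1)$; every such rotation lies in $\SO(3)$, so it lifts (nonuniquely, with kernel $\{\pm I\}$) to an element of $\SU(2)$, and the corresponding conjugation action automatically preserves $Q$ and realizes the prescribed permutation of its order-four elements. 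This argument bypasses any need for a case-by-case matrix calculation and immediately gives $\Aut_\Fcal(Q)=\Aut(Q)$.

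For concreteness, I would then verify two explicit generators of $\Out(Q)\cong\Sigma_3$. The quaternion $g=\tfrac{1}{2}(I+\imatrix+\jmatrix+\kmatrix)$ has unit norm (its determinant and both columns can be computed directly to equal $1$), so $g\in\SU(2)$; and it represents the $120^\circ$ rotation about the diagonal axis $(1,1,1)\in\reals^3$, so $g\imatrix g^{-1}=\jmatrix$, $g\jmatrix g^{-1}=\kmatrix$, $g\kmatrix g^{-1}=\imatrix$, which is a $3$-cycle. A transposition can be produced more cheaply: conjugation by an $8$th root of unity $a=e^{i\pi/4}\in T$ fixes $\jmatrix$ (since $T$ is abelian) and sends $\imatrix$ to $\kmatrixbar\in Q$, realizing the transposition $\langle\imatrix\rangle\leftrightarrow\langle\kmatrix\rangle$ in $\Out(Q)$.

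The only real subtlety is bookkeeping the signs correctly when identifying elements of $\Aut(Q)$ with cube-rotations and making sure each constructed element actually normalizes $Q$ rather than merely mapping into $\SU(2)$. The double-cover viewpoint makes both points essentially automatic, so the main obstacle reduces to the (routine) verification that a single explicit $g\in\SU(2)$ produces a $3$-cycle — everything else is either inner, in the torus, or a consequence of surjectivity of $\SU(2)\to\SO(3)$.
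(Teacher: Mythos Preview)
Your argument is correct, but it is genuinely different from the paper's proof. The paper proceeds by character theory: observing from the explicit matrices that $\chi_\rho(\pm I)=\pm 2$ and $\chi_\rho(q)=0$ for all non-central $q$, one sees that $\chi_{\rho\circ f}=\chi_\rho$ for any $f\in\Aut(Q)$ (since $f$ fixes the center and permutes the remaining elements). Hence $\rho$ and $\rho\circ f$ are conjugate in $\U(2)$, and dividing the conjugating matrix by a square root of its determinant yields a conjugating element in $\SU(2)$.

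Your approach instead exploits the adjoint double cover $\SU(2)\twoheadrightarrow\SO(3)$ together with the identification, already established in Lemma~\ref{lemma: automorphism group}, of $\Aut(Q)$ with the rotational octahedral group acting on $\{\pm\vec\imath,\pm\vec\jmath,\pm\vec k\}$. Each such rotation lifts to $\SU(2)$, and since the adjoint action on $\mathrm{Im}(\mathbb{H})$ is exactly that rotation, the lift conjugates the six order-four elements of $Q$ according to the prescribed permutation. The character-theoretic argument is slicker and requires no explicit element; your geometric argument has the compensating virtue of producing concrete normalizing elements---indeed your transposition $a=e^{i\pi/4}\in T$ is precisely the generator that reappears in the computation of $N_S(Q)\cong Q_{16}$ in Lemma~\ref{lemma: normalizer of Q}.
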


\begin{proof}
Let $\character{\rho}$ denote the character of~$\rho$, that is, $\character{\rho}(q)$ is the trace of the matrix assigned to the element $q\in Q$.
We observe from the matrix presentation of~$Q$ in~\eqref{diagram: Q matrices} that
$\character{\rho}(I)=2=-\character{\rho}(-I)$,
and $\character{\rho}(q)=0$ if $q\notin Z(Q)$.

Let $f\colon Q\rightarrow Q$ be an automorphism of~$Q$. Then $f$ fixes $Z(Q)$ and permutes the other six elements. Thus
$\character{\rho\circ f}=\character{\rho}$. Since we are considering complex representations, by character theory we know that $\character{\rho\circ f}=\character{\rho}$ implies that
$\rho$ and $\rho\circ f$ are conjugate in the unitary group~$\U(2)$. By dividing the conjugating matrix by the square root of its determinant, we obtain a matrix in $\SU(2)$ that conjugates
$\rho$ to $f\circ \rho$, as required.
\end{proof}

At this point, we have all of the ingredients necessary to determine the subgroups in the fusion system $\Fcal=\Fcal_{S}(\SU(2))$ that are both $\Fcal$-centric and $\Fcal$-radical.

\begin{proposition}    \label{proposition: two relevant groups}
There are exactly two $\Fcal$-conjugacy classes of subgroups that are both $\Fcal$-centric and $\Fcal$-radical, namely $S$ itself and the $\Fcal$-conjugacy class of~$Q$.
\end{proposition}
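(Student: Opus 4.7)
The plan is to assemble the proposition from the pieces built up earlier in the section. By Lemma~\ref{lemma: radical possibilities}, any proper $\Fcal$-centric $\Fcal$-radical subgroup of $S$ must be $S$-conjugate either to $\langle\imatrix\rangle$ or to the quaternionic group $Q=\langle\jmatrix,\imatrix\rangle$. The subgroup $\langle\imatrix\rangle$ was ruled out by Lemma~\ref{lem:Z/4-not-radical}, and Remark~\ref{remark: S is radical} together with the obvious fact $C_S(S)=Z(S)\subgroupeq S$ shows that $S$ itself is both $\Fcal$-centric and $\Fcal$-radical. Thus only $Q$ remains to be checked.

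For $\Fcal$-radicality of $Q$, I would appeal directly to Lemma~\ref{lemma: Q automorphisms realized}, which identifies $\Out_\Fcal(Q)\cong \Sigma_3$. Since the only normal subgroups of $\Sigma_3$ are $1$ and the Sylow $3$-subgroup $A_3$, there is no nontrivial normal $2$-subgroup, so Definition~\ref{definition:Fcentric-Fradical}\itemref{item: discrete Fradical} is satisfied. This is the quick step.

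The main work is checking that $Q$ is $\Fcal$-centric, which by definition requires $C_S(Q')\subgroupeq Q'$ for every $\Fcal$-conjugate $Q'$ of $Q$ contained in $S$. I would argue this by a direct matrix computation using the description $S=\langle T,\imatrix\rangle$ and the relation $\imatrix\, t\,\imatrix^{-1}=\bar t$ for $t\in T$. Any $\Fcal$-conjugate $Q'\leq S$ still has the form $\langle \jmatrix, s\imatrix\rangle$ for some $s\in T$ (since $Q'\cap T$ must be $\langle\jmatrix\rangle$ by order considerations). The centralizer of $\jmatrix$ in $S$ is $T$, because any element outside $T$ conjugates $\jmatrix$ to $\jmatrix^{-1}\neq\jmatrix$. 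Restricting to $T$, the centralizer of $s\imatrix$ in $T$ consists of those $t$ with $t=\bar t$, namely $\{I,-I\}$. Hence $C_S(Q')=\{I,-I\}=Z(Q)\subgroupeq Q'$, as required.

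Finally, combining the above: $S$ and the $\Fcal$-conjugacy class of $Q$ exhaust the $\Fcal$-centric $\Fcal$-radical subgroups, and they are clearly distinct (they have different orders), giving exactly the two $\Fcal$-conjugacy classes claimed. The anticipated obstacle is the centralizer computation for arbitrary $\Fcal$-conjugates of $Q$, but this is tractable since the structure $[S:T]=2$ with $\imatrix$ acting as complex conjugation makes all the necessary commutator relations explicit.
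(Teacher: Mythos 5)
Your proposal is correct and follows essentially the same route as the paper: reduce to $S$ and $Q$ via Lemmas~\ref{lemma: radical possibilities} and~\ref{lem:Z/4-not-radical}, handle $S$ by Remark~\ref{remark: S is radical}, get $\Fcal$-radicality of $Q$ from $\Out_\Fcal(Q)\cong\Sigma_3$, and verify centricity by a centralizer computation. The only (harmless) variation is that you check $C_S(Q')=\{\pm I\}$ directly for each conjugate $Q'\subgroupeq S$, whereas the paper computes $C_{\SU(2)}(Q)=\{\pm I\}=Z(Q)$ once, which covers all $\Fcal$-conjugates simultaneously.
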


\begin{proof}
By Lemmas~\ref{lemma: radical possibilities} and~\ref{lem:Z/4-not-radical}, the groups
$S$ and the $S$-conjugates of $Q$ are the only subgroups of $S$ that could be both $\Fcal$-centric and $\Fcal$-radical. The group $S$ itself is $\Fcal$-centric tautologically, and $\Fcal$-radical by Remark~\ref{remark: S is radical}, so consider~$Q$. Straightforward computation shows that if $A\in \SU(2)$ commutes with the elements in~$Q$, then $A$ must be $\Id$ or~$-\Id$.
Hence
\begin{equation}   \label{eq: centralizer-center}
C_{\SU(2)}Q=\{\pm I\}=Z(Q).
\end{equation}
Since $Q$ contains all elements of $\SU(2)$ that centralize it, then certainly $Q$ contains all elements of $S$ that centralize it, so $Q$ is $\Fcal$-centric. Secondly, to see that $Q$ is $\Fcal$-radical, notice that from Lemma~\ref{lemma: automorphism group} and Lemma~\ref{lemma: Q automorphisms realized}, we have $\Out_{\Fcal}Q\cong\Out(Q)\cong\Sigma_{3}$, which has no normal \twodash subgroup.
\end{proof}

In order to compute~$\pcomplete{\BSU(2)}$, we know from Theorem~\ref{theorem:Lpcom=BGpcom} that we should determine the centric linking system associated to the fusion category.
Further,
Proposition~\ref{proposition:Hfamilies} tells us that we only need to know the full subcategory $\Lcal^{cr}$ of the linking system that has just the $\Fcal$-centric, $\Fcal$-radical subgroups as its objects.
Lastly, we know from Proposition~\ref{proposition: two relevant groups} that there are only two isomorphism (i.e. $\Fcal$-conjugacy) classes of such groups,
namely $S$ itself and~$Q$.

To compute the linking system, we actually find ourselves in the situation of Corollary~\ref{corollary:T=BG}, and we can use the transporter category (Definition~\ref{definition:transporter}).
This is because $C_{\SU(2)}Q=\{\pm I\}$ is a \twodash group
(see~\eqref{eq: centralizer-center}), and likewise
\begin{equation}   \label{eq: centralizer-S-center}
C_{\SU(2)}S=\{\pm I\}=Z(S),
\end{equation}
because $S\supergroup Q$. Hence computing
$N_{\SU(2)}S$, $N_{\SU(2)}Q$, and $N_{\SU(2)}(Q,S)$ will give us
an explicit description of~$\Lcal^{cr}$.

\begin{lemma}     \label{lemma: normalizers of S and Q}
\hfill
\begin{enumerate}[(i)]
\item The normalizer of $S$ in $\SU(2)$ is~$S$.
\item The normalizer of $Q$ in $\SU(2)$ is~$O_{48}$, the binary octahedral group of order~$48$.
\end{enumerate}
\end{lemma}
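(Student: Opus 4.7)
For part (i), the plan is to first show that $N_{\SU(2)}(S)$ lies inside $N_{\SU(2)}(\Tbold)$, and then to do a direct matrix computation to cut this normalizer down to $S$ itself. Since $T=\ZpinfinitySpecific{2}$ is dense in $\Tbold$, the group $S=\langle T,\imatrix\rangle$ is dense in $N_{\SU(2)}(\Tbold)$. Hence if $g\in\SU(2)$ normalizes $S$, then by continuity of conjugation $g$ normalizes the closure $N_{\SU(2)}(\Tbold)$; but any element normalizing $N_{\SU(2)}(\Tbold)$ must preserve its identity component $\Tbold$, so $g\in N_{\SU(2)}(\Tbold)$.

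Next, I would write an arbitrary element of $N_{\SU(2)}(\Tbold)$ as $t$ or $t\imatrix$ with $t=\operatorname{diag}(a,\bar a)\in\Tbold$, and compute that conjugation of $\imatrix$ by either one yields $a^{2}\imatrix$. Thus the element lies in $N_{\SU(2)}(S)$ exactly when $a^{2}\in T$. Since $T$ consists of all $2$-power roots of unity, $a^{2}\in T$ forces $a$ itself to be a $2$-power root of unity, hence $a\in T$ and the original element lies in $S$. Combined with the obvious inclusion $S\subseteq N_{\SU(2)}(S)$, this gives $N_{\SU(2)}(S)=S$.

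For part (ii), the key observation is that $C_{\SU(2)}(Q)=\{\pm I\}=Z(Q)$ by~\eqref{eq: centralizer-center}, and that Lemma~\ref{lemma: automorphism group} and Lemma~\ref{lemma: Q automorphisms realized} show $\Aut_{\SU(2)}(Q)=\Aut(Q)\cong\Sigma_{4}$. The standard short exact sequence
\[
1\longrightarrow C_{\SU(2)}(Q)\longrightarrow N_{\SU(2)}(Q)\longrightarrow \Aut_{\SU(2)}(Q)\longrightarrow 1
\]
then forces $\left|N_{\SU(2)}(Q)\right|=2\cdot 24=48$, and $N_{\SU(2)}(Q)$ is a finite subgroup of $\SU(2)$ of order~$48$ containing $Q$.

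To identify this group concretely with $O_{48}$, I plan to invoke the classical ADE classification of finite subgroups of $\SU(2)$, in which the only subgroup of order~$48$ is the binary octahedral group. Alternatively, one can exhibit explicit generators: the inner automorphism action of $Q$ on itself gives the Klein four quotient, and an element such as $\tfrac{1}{\sqrt{2}}(\imatrix+\jmatrix)$ (of order~$8$) together with $\tfrac{1}{2}(I+\imatrix+\jmatrix+\kmatrix)$ (of order~$6$) produce the missing outer automorphisms (a transposition and a $3$-cycle on $\{\imatrix,\jmatrix,\kmatrix\}$), realizing the full extension of $\Sigma_{4}$ by $Z(Q)$ as $2\cdot\Sigma_{4}=O_{48}$ in the standard notation. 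The most delicate step is this final identification: it is easy to pin down the order and the outer quotient, but one must be careful about which of the central extensions of $\Sigma_{4}$ by $\integers/2$ actually occurs, and the cleanest route is to appeal to the classification (or, equivalently, to observe that $N_{\SU(2)}(Q)$ projects onto the full octahedral rotation group $\Sigma_{4}$ under the double cover $\SU(2)\to\SO(3)$, whose preimage is by definition~$O_{48}$).
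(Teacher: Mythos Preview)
Your proof is correct and follows the paper's overall strategy, with some differences in execution worth noting.

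For part~(ii), both you and the paper use the short exact sequence
\[
1\longrightarrow C_{\SU(2)}(Q)\longrightarrow N_{\SU(2)}(Q)\longrightarrow \Aut_{\Fcal}(Q)\longrightarrow 1
\]
together with~\eqref{eq: centralizer-center} and Lemma~\ref{lemma: Q automorphisms realized} to pin down $\lvert N_{\SU(2)}(Q)\rvert=48$. The identification step differs: the paper observes directly that $O_{48}$, by its very definition as the unit quaternions permuting the octahedron vertices $\{\pm\imatrix,\pm\jmatrix,\pm\kmatrix\}$ under conjugation, contains~$Q$ and normalizes it, and then compares orders. Your route via the ADE classification of finite subgroups of $\SU(2)$, or via the double cover $\SU(2)\to\SO(3)$, is equally valid; the paper's argument is slightly more self-contained since it does not appeal to an external classification.

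For part~(i), your argument is in fact more careful than the paper's. The paper asserts that $n\in N_{\SU(2)}(S)$ normalizes the identity component~$T$ and hence lies in ``$N_{\SU(2)}(T)=S$''; but this equality is not literally correct, since all of~$\Tbold$ centralizes~$T$ and so $N_{\SU(2)}(T)\supseteq N_{\SU(2)}(\Tbold)\supsetneq S$. You avoid this by passing to the closure $N_{\SU(2)}(\Tbold)$ and then performing the explicit computation $t\,\imatrix\,t^{-1}=t^{2}\imatrix$ to force $t\in T$, which supplies exactly the step the paper's sketch elides.
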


\begin{proof}
Suppose that $n\in N_{\SU(2)}S$. Then $n$ normalizes the identity component
of~$S$, which is~$T$. Hence $n\in N_{\SU(2)}T=S$. Because $S$ has to be
a discrete \twodash toral group, in principle, $S$ could be strictly contained in $N_{\SU(2)}T$; that does at the prime~$2$
because $N_{\SU(2)}T$ is already \twodash toral.

Next, consider the normalizer of $Q$ in~$\SU(2)$.
All abstract automorphisms of $Q$ are realized by conjugation in~$\SU(2)$
(Lemma~\ref{lemma: automorphism group}). Further, we have a central extension
\[
1\rightarrow \Cen_{\SU(2)}Q
\rightarrow N_{\SU(2)}Q
\rightarrow \Aut_{\Fcal}(Q)
\rightarrow 1,
\]
and by ~\eqref{eq: centralizer-center} $\Cen_{\SU(2)}Q=Z(Q)=\{\pm I\}$, while by Lemma~\ref{lemma: Q automorphisms realized},
$\Aut_{\Fcal}(Q)\cong\Sigma_{4}$. So the normalizer is a finite group of order~$48$.
We assert that this group is the ``binary octahedral group,"~$O_{48}$; by definition, this is the group of unit quaternions $q\in S^3\cong \SU(2)$ that permute the vertices
$\{\pm\vec{\textit{\i}}, \pm\vec{\textit{\j}}, \pm\vec{\textit{k}}\}$
of an octahedron, acting by conjugation.
The subgroup $Q$ is a normal subgroup of $O_{48}$, so $O_{48}\subgroupeq N_{SU(2)}(Q)$ and since they have the same order, we find that $O_{48}=N_{SU(2)}(Q)$.
\end{proof}

We summarize what we have computed so far. The isomorphism classes of objects of
$\Tcal^{cr}$ are represented by $S$ and~$Q$. Morphisms for $P,Q\in \Obj(\Tcal^{cr})$
are given by  $\Hom_{\Tcal^{cr}}(P,Q)=N_{\SU(2)}(P,Q)$, so in particular
$\Aut_{\Tcal^{cr}}(S)=S$ and $\Aut_{\Tcal^{cr}}(Q)=O_{48}$.
So we have
\begin{equation}    \label{eq: SU linking system}
\begin{gathered}
 \xymatrix{
Q\ar@(dl,lu)^{O_{48}}\ar[r]^-{N} & S\ar@(dr,ru)_{S}
}
\end{gathered}
\end{equation}
where the set of morphisms from $Q$ to~$S$ is $N=N_{\SU(2)}(Q,S)$.
To complete our understanding of the transporter category,
we would need to compute~$N$.
However, the proof we give of the main result in Section~\ref{section: SU(2) SO(3)}
takes a different
tack,
for which we need
$N_{\SU(2)}(Q)\cap N_{\SU(2)}(S)$. That is, we want to know what elements of
$N_{\SU(2)}S=S$ normalize~$Q$, i.e. $N_{S}Q$.
Let $Q_{16}$ denote the generalized quaternion group of order~$16$, which has the presentation
\[
\left\langle\largestrut a, b\suchthat{a^8=1,\, a^4=b^2,\, b^{-1}ab=a^{-1}}
\right\rangle.
\]

\begin{lemma}   \label{lemma: normalizer of Q}
$N_{S}(Q)\cong Q_{16}$.
\end{lemma}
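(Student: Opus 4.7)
The plan is to compute $N_S(Q)$ directly by exploiting the description of $S$ from the extension \eqref{eq: ses for S}, and then recognize the resulting $16$-element group as $Q_{16}$ via the presentation given in the statement.

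First, I would observe that every element of $S$ is either in the discrete torus $T=\ZpinfinitySpecific{2}$ or in the nontrivial coset $T\imatrix$. Note also that $Q$ is generated by $\jmatrix$ and $\imatrix$, with $Q\cap T=\langle\jmatrix\rangle\cong\integers/4$. The key algebraic fact is that $\imatrix$ acts on $T$ by complex conjugation, so $\imatrix\, t\,\imatrix^{-1}=t^{-1}$ for every $t\in T$; equivalently, $\imatrix\, t^{-1}=t\,\imatrix$.

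Next, I would analyze which $s\in S$ satisfy $sQs^{-1}=Q$. For $s=t\in T$, since $T$ is abelian we have $t\jmatrix t^{-1}=\jmatrix\in Q$, so the only condition is $t\imatrix t^{-1}\in Q$. Using the relation above,
\[
t\,\imatrix\, t^{-1}=t\cdot(t\,\imatrix)=t^{2}\,\imatrix,
\]
and this lies in $Q$ iff $t^{2}\in Q\cap T=\langle\jmatrix\rangle\cong\integers/4$. Since $\jmatrix$ corresponds to a primitive $4$th root of unity in $T$, this happens exactly when $t$ has order dividing~$8$, i.e.\ $t$ lies in the unique subgroup $\integers/8\subgroupeq T$. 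For $s=t\,\imatrix$ with $t\in T$, the fact that $\imatrix\in Q$ already normalizes $Q$ reduces the condition $sQs^{-1}=Q$ to $tQt^{-1}=Q$, yielding the same conclusion. Therefore
\[
N_{S}(Q)=\langle\,\integers/8,\ \imatrix\,\rangle\subgroupeq S,
\]
which has order $16$.

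Finally, I would identify this group with $Q_{16}$ by exhibiting the required presentation. Let $a$ be a generator of $\integers/8\subgroupeq T$ (so $a$ is a primitive $8$th root of unity on the diagonal) and $b=\imatrix$. Then $a^{8}=I$, and $a^{4}=-I=\imatrix^{2}=b^{2}$ since the primitive $4$th power of an $8$th root of unity equals $-1$. The relation $b^{-1}ab=a^{-1}$ follows from $\imatrix^{-1}a\imatrix=a^{-1}$, which is precisely the complex conjugation action of $\imatrix$ on $T$. This matches the given presentation of $Q_{16}$, completing the identification. No real obstacle arises here; the only step to be careful about is the computation $t\imatrix t^{-1}=t^{2}\imatrix$, which follows cleanly from the non-split extension structure of~$S$.
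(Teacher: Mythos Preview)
Your proof is correct and follows essentially the same approach as the paper: both reduce to determining $N_S(Q)\cap T$ by checking when conjugation by $t\in T$ sends $\imatrix$ back into~$Q$, obtain $\integers/8\subgroupeq T$, and then verify the $Q_{16}$ presentation with $a$ a primitive eighth root of unity and $b=\imatrix$. The only cosmetic difference is that the paper writes the torus elements as complex exponentials and computes $e^{-ix}\,\imatrix\,e^{ix}$ directly, whereas you phrase the same computation via the relation $\imatrix\,t^{-1}=t\,\imatrix$; the content is identical.
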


\begin{proof}
Since $\imatrix\in Q$, and $\imatrix$ together with $T$ generates~$S$, there is a short exact sequence
\begin{equation}     \label{eq: normalizer ses}
1 \rightarrow N_{S}(Q)\cap T
  \rightarrow N_{S}(Q)
  \rightarrow \integers/2
  \rightarrow 1,
\end{equation}
where the quotient is generated by~$\imatrix$.
To determine $N_{S}(Q)\cap T$, we identify elements of $T$  with complex exponentials, and we can write
\[
Q=\left\{e^{i\left(n\frac{\pi}{2}\right)},
\,\imatrix e^{i\left(n\frac{\pi}{2}\right)}\suchthat{\largestrut n=0,1,2,3 }\right\}.
\]
If $e^{ix}\in T$ normalizes~$Q$, it must
conjugate $\imatrix$ back into~$Q$. Conjugation by $e^{ix}$ fixes
$Q\cap T
=\{e^{i\left(n\frac{\pi}{2}\right)}\suchthat{ n=0,1,2,3} \}$,
so there must be a value of $n$ such that
\begin{align*}
e^{-ix}\,\imatrix\, e^{ix}
    &= \imatrix\, e^{i\left(n\frac{\pi}{2}\right)}.\\
\intertext{Since $\imatrix$ acts on the torus by complex conjugation, we get}
e^{i(2x)}&=e^{i\left(n\frac{\pi}{2}\right)}.
\end{align*}
Hence
$N_{S}(Q)\cap T\cong\langle e^{\pi i/4}\rangle$.
Let $a=e^{i\pi/4}\in T$ and $b=\imatrix$, and observe that $a$ has order~$8$, that $a^4=b^2$, and
\[
b^{-1}ab=\imatrix^{-1}e^{i\pi/4}\imatrix = e^{-i\pi/4}=a^{-1},
\]
again because $\imatrix$ acts on the torus by complex conjugation.
The result now follows from~\eqref{eq: normalizer ses} because we have accounted for all the elements and exhibited an appropriate presentation.
\end{proof}


\section{Mod $2$ decomposition of $\BSU(2)$ and $\BSO(3)$}
\label{section: SU(2) SO(3)}

In this section we will prove a discrete version of the Dwyer-Miller-Wilkerson decompositions of $\BSU(2)$
and~$\BSO(3)$ at the prime~$2$ by using the discrete \twodash local information obtained in Section~\ref{section:2local}. We follow the philosophy of Dwyer's normalizer decomposition for classifying spaces of finite groups \cite{Dwyer-Homology}, together with that of Libman's proof of the existence of the normalizer decomposition for classifying spaces of saturated fusion systems over finite \pdash groups \cite{Libman-normalizer}.

We continue with the notation of Section~\ref{section:2local}.
We write $\Fcal$ for the fusion system associated to the compact Lie group $\SU(2)$ with maximal discrete \twodash toral subgroup~$S$.
Let $\Tcal^{cr}$ denote the $\Fcal$-centric and $\Fcal$-radical subcategory of the transporter category associated to $\SU(2)$. The isomorphism classes of objects, together with their automorphism groups, were explicitly computed in Section~\ref{section:2local}. Representatives of the classes
of objects are given by $S$ and~$Q$, with $\Aut_{\Tcal^{cr}}(S)=S$ and $\Aut_{\Tcal^{cr}}(Q)=O_{48}$ (Lemma~\ref{lemma: normalizers of S and Q}).

The category $\Tcal^{cr}$ is an EI-category; that is, all endomorphisms are isomorphisms. We apply the techniques developed by \Slominska~\cite{S} in the homotopy theory of EI-categories, and implemented by Libman \cite{Libman-normalizer},
to describe the nerve of $\Tcal^{cr}$ as a homotopy colimit indexed on a poset category. Evaluating this homotopy colimit leads to a decomposition of $\BSU(2)$.

The first step is to consider the subdivision category of a ``heighted category." Suppose that $\Ccal$ is an EI-category equipped with a height function $h\colon \Obj(\Ccal)\rightarrow \naturals$. It is assumed that morphisms of $\Ccal$ do not decrease height, and that isomorphisms are exactly the morphisms that preserve height. The \defining{subdivision category of~$\Ccal$}, denoted~$s(\Ccal)$, is a category with objects $\Cbold$ given by chains of composable morphisms in $\Ccal$ that strictly increase height,
\[
\Cbold=(c_0\to \cdots \to c_n).
\]
A morphism from $\Cbold\rightarrow \Cbold'$ is given by compatible isomorphisms from $\Cbold'$ to a subsequence in $\Cbold$
(see \cite[Proposition 1.3]{S} or \cite[Definition 4.1]{Libman-normalizer}). In particular, an automorphism of $\Cbold$ is an automorphism of $c_n$ that restricts to automorphisms of $c_i$ for every $0\leq i <n$.

\begin{proposition}\cite[Proposition~1.5]{S}
\label{proposition:subdivision}
Let $\Ccal$ be an EI-category equipped with a height function.
The functor $s(\Ccal)\rightarrow \Ccal$
given by $\Cbold\mapsto c_0$ is right cofinal, and in particular
induces a homotopy equivalence
$\realize{s(\Ccal)}\simeq\realize{\Ccal}$.
\end{proposition}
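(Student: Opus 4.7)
The plan is to invoke Quillen's Theorem A applied to the functor $F\colon s(\Ccal)\to\Ccal$ with $F(\Cbold)=c_0$. I will show that for every $c\in\Obj(\Ccal)$, the comma category $(c\downarrow F)$ has contractible nerve; this yields both right cofinality of $F$ and the induced homotopy equivalence $\realize{s(\Ccal)}\simeq \realize{\Ccal}$.

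An object of $(c\downarrow F)$ is a pair $(\Cbold,\alpha)$ with $\Cbold=(c_0\to\cdots\to c_n)$ a strictly height-increasing chain and $\alpha\colon c\to c_0$ a morphism in $\Ccal$; a morphism $(\Cbold,\alpha)\to(\Cbold',\alpha')$ is a morphism $\phi\colon\Cbold\to\Cbold'$ in $s(\Ccal)$ satisfying $F(\phi)\circ\alpha=\alpha'$. Designate $E_c\definedas ((c),\id_c)$ as the basepoint singleton object. The height-function axioms force $\alpha$ to be either an isomorphism (when $h(c)=h(c_0)$) or strictly height-increasing (when $h(c)<h(c_0)$). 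I will construct an endofunctor $P$ of $(c\downarrow F)$ that ``prepends $c$'' to each chain: in the strict case, $P(\Cbold,\alpha)$ is the chain $(c\to c_0\to\cdots\to c_n)$ with leading arrow $\alpha$ and structure map $\id_c$; in the isomorphism case, $P(\Cbold,\alpha)$ is $(c\to c_1\to\cdots\to c_n)$ with leading arrow $(c_0\to c_1)\circ\alpha$ and structure map $\id_c$ (reducing to $E_c$ when $n=0$). Strict height increase is preserved in both cases, so $P(\Cbold,\alpha)$ lies in $s(\Ccal)$.

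Two natural transformations will then produce the contraction. First, there is a canonical morphism $P(\Cbold,\alpha)\to(\Cbold,\alpha)$ in $(c\downarrow F)$: on underlying subdivision objects, $\Cbold$ sits as a subsequence of $P(\Cbold,\alpha)$ either by literally omitting the prepended $c$ (strict case) or after identifying $c_0$ with $c$ via the isomorphism $\alpha$ (iso case), and the compatibility $F(\pi)\circ\id_c=\alpha$ holds by construction, giving a natural transformation $P\Rightarrow\mathrm{id}$. Second, the singleton $(c)$ is itself the initial subchain of $P(\Cbold,\alpha)$, yielding a natural morphism $P(\Cbold,\alpha)\to E_c$ and hence a natural transformation $P\Rightarrow\mathrm{const}_{E_c}$. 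The zigzag $\mathrm{id}\Leftarrow P\Rightarrow\mathrm{const}_{E_c}$ induces a simplicial homotopy on $\realize{(c\downarrow F)}$ between the identity and a constant map, establishing contractibility.

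The main obstacle is verifying functoriality of $P$: a morphism $\phi\colon(\Cbold,\alpha)\to(\Cbold',\alpha')$ can relate an object with $\alpha$ an isomorphism to one with $\alpha'$ strictly height-increasing (the reverse direction is ruled out because $\alpha'=F(\phi)\circ\alpha$ forces $h(c)\leq h(c_0)\leq h(c_0')$, so $\alpha'$ iso implies $\alpha$ iso). Using the EI structure and the factorization $\alpha'=F(\phi)\circ\alpha$, one checks case by case that the subchain identifications defining the candidate $P(\phi)\colon P(\Cbold,\alpha)\to P(\Cbold',\alpha')$ in $s(\Ccal)$ glue compatibly across these situations; once $P$ is functorial, the naturality of the two transformations above is a direct check, and Quillen's Theorem A finishes the argument.
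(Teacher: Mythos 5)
Your proposal is correct, but it cannot be ``the same approach as the paper'' for the simple reason that the paper gives no proof of Proposition~\ref{proposition:subdivision} at all: it is quoted from \Slominska{} \cite[Proposition~1.5]{S} (see also \cite{Libman-normalizer} for the analogous statement in the fusion-system setting). What you supply is a self-contained verification via Quillen's Theorem~A/homotopy cofinality: you contract each under-category $(c\downarrow F)$ by the zigzag $\id \Leftarrow P \Rightarrow \mathrm{const}_{E_c}$ for the ``prepend $c$'' endofunctor~$P$. This is essentially the standard argument for such subdivision statements, so what your version buys is self-containedness rather than a new idea. Two confirmations: first, your choice of comma category is the right one, since contractibility of the under-categories $(c\downarrow F)$ is exactly what ``right cofinal'' means here, and it yields the nerve equivalence by taking the constant diagram. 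Second, the functoriality of $P$ that you flag as the main obstacle does go through: because a morphism $\phi\colon \Cbold\to\Cbold'$ of $s(\Ccal)$ carries $\Cbold'$ isomorphically onto a subsequence $(c_{i_0},\dots,c_{i_m})$ of $\Cbold$ and isomorphisms preserve height, the relation $\alpha'=F(\phi)\circ\alpha$ forces $i_0=0$ with $\alpha$ an isomorphism whenever $\alpha'$ is one, and forces $i_0\geq 1$ in the mixed case ($\alpha$ iso, $\alpha'$ strict); hence in every case $P(\phi)$ is ``$\id_c$ on the prepended object and the $\phi_j$ on the rest,'' which is visibly compatible with composition, and naturality of both transformations reduces to the identity $\phi_0\circ\alpha'=u\circ\alpha$ (where $u\colon c_0\to c_{i_0}$ is the chain composite), which is precisely the comma-category condition. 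So the argument is sound as written; spelling out the three cases of $P(\phi)$ would make it complete.
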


\begin{example}\label{example:subdivisionofL}
The category $\Tcal^{cr}$, pictured in~\eqref{eq: SU linking system}, is an EI-category with a height function given by $h(Q)=0$ and $h(S)=1$. We assert that its subdivision category $s(\Tcal^{cr})$ has the following skeletal subcategory:
\begin{equation} \label{eq: SU EI system}
\begin{gathered}
 \xymatrix{
\{Q\} \POS!L(0.7)\ar@(ul,dl)_{O_{48}}
 & \{Q\subgroup S\}\POS!U(.1) \ar@(ul,ur)^{Q_{16}}
 \ar[r]^-{S}
 \ar[l]_-{O_{48}}
 & \{S\} \POS!R(.7) \ar@(ur,dr)^{S}
 }
 \end{gathered}
\end{equation}
where the labels on the arrows indicate the morphism sets.
The automorphisms of $\{Q\}$ and $\{S\}$ come from Lemma~\ref{lemma: normalizers of S and Q}.
An automorphism of the chain $\{ Q\subseteq S \}$ is an automorphism of $S$ that
restricts to an automorphism of $Q$; that is, an element of $N_{\SU(2)}(Q)\cap
N_{\SU(2)}(S)$.
Using Lemma~\ref{lemma: normalizers of S and Q} and Lemma~\ref{lemma: normalizer
of Q}, we obtain
\begin{align*}
\Aut_{s(\Tcal^{cr})}(Q\subgroup S)
     &=N_{\SU(2)}(Q)\cap N_{\SU(2)}(S)\\
     &=N_{S}(Q)\\
     &=Q_{16}.
\end{align*}
Morphisms $\{Q\subgroup S\}\rightarrow\{S\}$ are just diagrams
\[
\xymatrix{
&S  \ar[d]\\
Q \ar[r]& S,
}
\]
and these are determined by morphisms $S\to S$ in $\Tcal^{cr}$.
Therefore we find that
$\Hom_{\Tcal^{cr}}(\{ Q\subgroup S \}, S)
     \cong \Aut_{\Tcal^{cr}}(S)\cong S$.
Likewise,
$\Hom_{\Tcal^{cr}}(\{ Q\subgroup S \}, \{ Q \})
     \cong \Hom_{\Tcal^{cr}}(Q)\cong O_{48}$.
Lastly, each of these two morphism sets has a free transitive action of the automorphism group of its target object.
\end{example}

The theorem below (Theorem~\ref{theorem:DMWSU(2)}) is the first part of our main result. We establish the decomposition of $\pcomplete{\BSU(2)}$ given in~\cite[Theorem~4.1]{DMW1}
as an instance of a
normalizer decomposition (in the sense of \cite{Dwyer-Homology}) with respect $\Fcal$-centric $\Fcal$-radical discrete \pdash toral subgroups, applied to the transporter category~$\Tcal^{cr}$.
Earlier work of Libman \cite{Libman-normalizer} established
the existence of a normalizer decomposition for saturated fusion systems over finite \pdash groups. In our case, we deal with a fusion system associated to an infinite, discrete \twodash toral group $S$ instead of a finite group.
Libman's argument uses the centric linking system, whereas we focus on the transporter category~$\Tcal^{cr}$.

\begin{theorem}\label{theorem:DMWSU(2)}
Let $S\subgroupeq  \SU(2)$ be the maximal  discrete \twodash  toral subgroup generated by $\ZpinfinitySpecific{2}$ and \small
$\imatrix=\twobytwo{0}{1}{-1}{0}$\normalsize$\in \SU(2)$.
Let $Q_{16}\subgroupeq O_{48}\subgroupeq \SU(2)$ be, respectively, the
quaternionic subgroup of order~$16$ and the binary octahedral subgroup of order~$48$. Let $X$ be the homotopy pushout
\begin{equation}\label{eq:BSO(3)-pushout}
\begin{gathered}
\xymatrix{
B\,Q_{16}
   \ar[d]\ar[r]
& BS
   \ar[d]\\
B\,O_{48}
   \ar[r]
& X,
}
\end{gathered}
\end{equation}
where the maps are induced by inclusions of subgroups. Then the  \twodash  completion of the induced map $X\rightarrow \BSU(2)$ is a homotopy equivalence.
\end{theorem}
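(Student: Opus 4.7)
The plan is to combine Corollary~\ref{corollary:T=BG} with Proposition~\ref{proposition:subdivision} to reduce the theorem to an explicit identification of the nerve of the (skeleton of the) subdivision category $s(\Tcal^{cr})$ computed in Example~\ref{example:subdivisionofL}. The hypotheses of Corollary~\ref{corollary:T=BG} hold in our setting because of equations~\eqref{eq: centralizer-center} and~\eqref{eq: centralizer-S-center}: the centralizers of both $S$ and~$Q$ in $\SU(2)$ are equal to~$Z(S)=Z(Q)=\{\pm I\}$, which is a finite \twodash group. Hence $\pcomplete{\realize{\Tcal^{cr}}}\simeq\pcomplete{\BSU(2)}$, and by Proposition~\ref{proposition:subdivision} this is further equivalent to $\pcomplete{\realize{s(\Tcal^{cr})}}$.

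The next step is to compute $\realize{s(\Tcal^{cr})}$ directly from the skeletal model in~\eqref{eq: SU EI system}. That category has three isomorphism classes of objects, namely $\{Q\}$, $\{Q\subgroup S\}$, and $\{S\}$, with automorphism groups $O_{48}$, $Q_{16}$, and~$S$, respectively. As noted in Example~\ref{example:subdivisionofL}, the only non-identity morphism sets between distinct isomorphism classes go from the middle object to the two outer ones, and each is a free transitive torsor under the automorphism group of its target. This is exactly the structural condition that makes the Słomińska/Libman normalizer-decomposition machinery (\cite{S}, \cite{Libman-normalizer}) apply: the nerve of the EI-category splits as a homotopy colimit over the poset of isomorphism classes of chains, with the value at each chain being the classifying space of its automorphism group. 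In our case the indexing poset is the span
\[
\{Q\} \;\longleftarrow\; \{Q\subgroup S\} \;\longrightarrow\; \{S\},
\]
and the functor takes the values $BO_{48}$, $BQ_{16}$, and~$BS$. Evaluating this homotopy colimit yields exactly the pushout $X$ in~\eqref{eq:BSO(3)-pushout}, provided we identify the structure maps $BQ_{16}\to BO_{48}$ and $BQ_{16}\to BS$ with those induced by the subgroup inclusions $Q_{16}\subgroupeq O_{48}$ and $Q_{16}\subgroupeq S$. This identification follows immediately from the explicit description in Example~\ref{example:subdivisionofL}, because the automorphism group of the chain $\{Q\subgroup S\}$ is computed (using Lemmas~\ref{lemma: normalizers of S and Q} and~\ref{lemma: normalizer of Q}) as the intersection $N_{\SU(2)}(Q)\cap N_{\SU(2)}(S)=N_S(Q)\cong Q_{16}$, sitting inside both $O_{48}=N_{\SU(2)}(Q)$ and $S=N_{\SU(2)}(S)$ by the evident inclusions.

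Combining these steps gives $\pcomplete{\BSU(2)}\simeq\pcomplete{X}$, with the equivalence realized by the map $X\to\BSU(2)$ induced by the subgroup inclusions, which is the content of the theorem.

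The main obstacle, or at least the point requiring the most care, is justifying the decomposition of $\realize{s(\Tcal^{cr})}$ as the pushout in step two. The classical Słomińska theorem is stated for finite EI-categories, whereas our automorphism group $S$ is infinite (discrete \twodash toral). However, the argument is purely categorical: it uses only that each non-identity morphism set between non-isomorphic objects is a free torsor under the automorphism group of its target, a condition that holds here by direct inspection. The key verification to write out is therefore that the comma categories that appear in Thomason's homotopy colimit theorem, when applied to the projection from $s(\Tcal^{cr})$ to the poset of isomorphism classes, have contractible nerves, which reduces to the freeness statement above. Once this is established, the identification of the hocolim with the pushout in~\eqref{eq:BSO(3)-pushout} is formal.
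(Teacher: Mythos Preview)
Your proposal is correct and follows essentially the same approach as the paper: verify the hypotheses of Corollary~\ref{corollary:T=BG} via~\eqref{eq: centralizer-center} and~\eqref{eq: centralizer-S-center}, pass to the subdivision via Proposition~\ref{proposition:subdivision}, and then identify $\realize{s(\Tcal^{cr})}$ with the homotopy pushout using Thomason's theorem. The only presentational difference is that where you argue via the free-torsor condition and contractibility of comma categories, the paper packages the same observation by noting directly that the skeletal subcategory~\eqref{eq: SU EI system} \emph{is} the Grothendieck construction of the functor $\Bcal\colon \sbar(\Tcal^{cr})\to\Cat$ sending each chain to its automorphism group (as a one-object category), so that Thomason's theorem applies immediately without a separate comma-category check; this also sidesteps any worry about finiteness hypotheses in~\cite{S}.
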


\begin{proof}
We want to apply Corollary~\ref{corollary:T=BG} in order to use the transporter category instead of the linking category to find~$\pcomplete{\BSU(2)}$.
From \eqref{eq: centralizer-center}
and~\eqref{eq: centralizer-S-center},
we have the required condition for~$Q$ and~$S$.
Hence $\Tcal^{cr}\rightarrow \Bcal\SU(2)$ induces an  equivalence $\pcomplete{\realize{\Tcal^{cr}}}\simeq \pcomplete{\BSU(2)}$
by Corollary~\ref{corollary:T=BG}. We also have an homotopy equivalence $\realize{\Tcal^{cr}}\simeq \realize{s(\Tcal^{cr})}$ by Proposition \ref{proposition:subdivision}. We focus then on the subdivision category~$s(\Tcal^{cr})$, to describe its nerve as a homotopy pushout.

The subdivision category
$s(\Tcal^{cr})$ is pictured in~\eqref{eq: SU EI system}.
The underlying category for the pushout will be given by considering isomorphism classes of objects in $s(\Tcal^{cr})$.
Let $\sbar(\Tcal^{cr})$ be the \emph{poset} of isomorphism classes of objects in $s(\Tcal^{cr})$ given by
 \[
 \{S\}\longleftarrow \{Q\subgroup S\} \longrightarrow \{Q\}.
 \]
Consider the functor $\Bcal\colon \sbar{\Tcal^{cr}}\rightarrow \Cat$ given by the one object category of the corresponding automorphism group in $s(\Tcal^{cr})$. That is,
\begin{align}     \label{eq: what are the spaces}
\Bcal(\{S\})&=\Bcal\Aut_{s(\Tcal^{cr})}(\{S\})=\Bcal S
     \notag\\
\Bcal(\{Q\})&=\Bcal\Aut_{s(\Tcal^{cr})}(\{Q\})=\Bcal O_{24}\\
\Bcal(\{Q\subgroup S\})&=\Bcal\Aut_{s(\Tcal^{cr})}(\{Q\subgroup S\})=\Bcal Q_{16},
     \notag
\end{align}
with functors induced by the inclusion of subgroups.
The Grothendieck construction $\Gr(\Bcal)$ is isomorphic
to~\eqref{eq: SU EI system}. See, for example, \cite[2.9]{Dwyer-Homology}. The objects of $\Gr(\Bcal)$ are also $\Obj(\sbar(\Tcal^{cr}))$ and morphisms are given by a inclusion or identity followed by an automorphism of the target. Hence Thomason's theorem \cite[2.9]{Dwyer-Homology} tells us that the nerve of
\eqref{eq: SU EI system} is homotopy equivalent to the homotopy colimit of~\eqref{eq:BSO(3)-pushout}, as required.
\end{proof}

\medskip

Lastly, we turn our attention to~$\BSO(3)$. In \cite[Corollary 4.2]{DMW1}, the mod~$2$ decomposition for $\BSO(3)$ is derived from the
mod~$2$ decomposition of $\BSU(2)$ using the fibration sequence associated to the double cover $\SU(2)\rightarrow\SO(3)$. We show that their decomposition for $\pcomplete{\BSO(3)}$ corresponds to the classifying space of a fusion system
in a way precisely analogous to the $\BSU(2)$ case.

The Lie group $\SO(3)$ is the quotient of $\SU(2)$ by its center $\integers/2=\{\pm I\}$, contained in the maximal torus~$S^1$.
In what follows, if $H$ is a subgroup of~$\SU(2)$, we write $\Hbar$ for its image in~$\SO(3)$. We sketch the argument that all of the structures we need are preserved by the quotient map.
First, the maximal torus $\Tboldbar$ of $\SO(3)$ is the quotient of the maximal torus $\Tbold$ of~$\SU(2)$. Likewise, we can see that
$N_{\SO(3)}\Tboldbar\cong (N_{\SU(3)}\Tbold)/\{\pm I\}$,
because $\{\pm I\}$ is central and contained in the torus.

The same statements are true for the discrete structures. That is, if $T$ and $S$ denote a maximal discrete \twodash torus and maximal \twodash toral subgroup of~$\SU(2)$, then $\Tbar$ and $\Sbar$ play the same roles in~$\SO(3)$.
This time, however, the extension
$\Tbar \hookrightarrow N_{\SO(3)}(\Tbar) \epi \integers/2$ is split. The splitting occurs because $\iboldbar$ (representing the generator of $\integers/2$) actually has order $2$ in~$\SO(3)$, as opposed to $\ibold$ which had order~$4$ in~$\SU(2)$ (see \eqref{eq: normalizer in SU(2)} and~\eqref{eq: ses for S}). Thus the maximal discrete \twodash toral subgroup $\Sbar$ is isomorphic to the semi-direct product $(\integers/2^\infty)\rtimes \integers/2$.

We need the following notation. Let $V\subgroupeq \Sbar$ be the subgroup $\integers/2\times\integers/2$, generated by $\integers/2\subgroupeq \Tbar$ and by~$\iboldbar$. Also, let $\Fcalbar$ and $\Tcalbar$ be the fusion and transporter system, respectively, for~$\SO(3)$ (but we do not mean to imply by the notation that these are actually quotients).

\begin{proposition}      \label{proposition: SO(3) transporter category}
The subdivision category $s(\Tcalbar^{cr})$ of $\Tcalbar^{cr}$ has the following skeletal subcategory:
\vspace{-1em}
\begin{equation*}
\begin{gathered}
 \xymatrix{
\{V\} \POS!L(0.7)\ar@(ul,dl)_{\overline{O_{48}}}
 & \{V\subgroupeq \Sbar\}\POS!U(.1) \ar@(ul,ur)^{\overline{Q_{16}}}
 \ar[r]^-{\Sbar}
 \ar[l]_-{\overline{O_{48}}}
 & \{\Sbar\} \POS!R(.7) \ar@(ur,dr)^{\Sbar}
 }
 \end{gathered}
\end{equation*}
\end{proposition}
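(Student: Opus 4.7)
The plan is to mirror the $\SU(2)$ analysis of Section~\ref{section:2local} and the reformulation in Example~\ref{example:subdivisionofL}, exploiting the fact that subgroups of $\SU(2)$ containing the center $\{\pm I\}$ correspond bijectively to subgroups of $\SO(3)$ via the quotient map. The first task is to classify the $\Fcalbar$-centric, $\Fcalbar$-radical subgroups of $\Sbar$, and I expect exactly two $\Fcalbar$-conjugacy classes, namely $\Sbar$ and~$V$. Since $Q$ contains the center $\{\pm I\}\subgroupeq\Tbold$, it descends to $V=Q/\{\pm I\}\cong\integers/2\times\integers/2$. To verify $V$ is $\Fcalbar$-centric, observe that $V$ is abelian so $V\subgroupeq C_{\SO(3)}(V)$, while $C_{\SU(2)}(Q)=\{\pm I\}$ by~\eqref{eq: centralizer-center} forces the reverse inclusion after quotienting. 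For $\Fcalbar$-radicality, every automorphism of $Q$ fixes $Z(Q)$ and hence descends, giving a surjection $\Aut_\Fcal(Q)\twoheadrightarrow\Aut_\Fcalbar(V)$ with kernel $\Inn(Q)$; since $V$ is abelian, $\Aut_V(V)=1$, and by Lemma~\ref{lemma: Q automorphisms realized} we obtain $\Out_\Fcalbar(V)\cong\Out_\Fcal(Q)\cong\Sigma_3$, which has no nontrivial normal $2$-subgroup. The group $\Sbar$ itself is tautologically centric and is $\Fcalbar$-radical by Remark~\ref{remark: S is radical}.

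The second step is to rule out other candidate subgroups by adapting Lemmas~\ref{lemma: SU2 bullets}, \ref{lemma: radical possibilities}, and \ref{lem:Z/4-not-radical}. Proper subgroups of $\Tbar$ are not $\Fcalbar$-centric, and $\Tbar$ itself is not $\Fcalbar$-radical because its Weyl action yields an outer automorphism of order $2$. For $P\subgroupeq\Sbar$ with $P\cap\Tbar$ containing $\integers/8$, the verbatim analogue of Lemma~\ref{lemma: SU2 bullets} forces $\Tbar\subgroupeq P^\bullet$, hence $P^\bullet=\Sbar$ whenever $P\not\subgroupeq\Tbar$, so no such proper $P$ can be centric-radical. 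The remaining cases have $P\cap\Tbar$ of order at most~$4$. Because $\iboldbar^2=1$ makes the extension of $\Sbar$ split, there are a few extra elementary abelian subgroups to consider that did not arise in the $\SU(2)$ analysis, but each is either $\Fcalbar$-conjugate to $V$ or fails one of centricity or radicality by direct inspection (a cyclic order-$4$ candidate, as in Lemma~\ref{lem:Z/4-not-radical}, admits a fusion-induced inversion and hence is not $\Fcalbar$-radical).

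Third, I would compute the three normalizers by pushing down from $\SU(2)$. The equality $N_{\SO(3)}(\Sbar)=\Sbar$ follows from the argument of Lemma~\ref{lemma: normalizers of S and Q}(i), since $N_{\SO(3)}\Tboldbar$ is already $2$-toral (being the quotient of $N_{\SU(2)}\Tbold$ by a central subgroup of the torus). For $N_{\SO(3)}(V)$, the central extension
\[
1\longrightarrow C_{\SO(3)}(V)\longrightarrow N_{\SO(3)}(V)\longrightarrow \Aut_\Fcalbar(V)\longrightarrow 1,
\]
together with $C_{\SO(3)}(V)=V$ of order $4$ and $\Aut_\Fcalbar(V)\cong\Sigma_3$ computed above, gives a group of order $24$, which is exactly the image of $N_{\SU(2)}(Q)=O_{48}$ in $\SO(3)$, that is, $\overline{O_{48}}$. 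Finally, applying the quotient to Lemma~\ref{lemma: normalizer of Q} gives $N_{\Sbar}(V)=\overline{N_S(Q)}=\overline{Q_{16}}$, the dihedral group of order~$8$.

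With these ingredients in hand, the construction of $s(\Tcalbar^{cr})$ is formal and follows Example~\ref{example:subdivisionofL} verbatim. The three isomorphism classes of objects are $\{V\}$, $\{\Sbar\}$, and the chain $\{V\subgroup\Sbar\}$; their automorphism groups in $s(\Tcalbar^{cr})$ are the three normalizers just computed; and the two morphism sets $\{V\subgroup\Sbar\}\to\{\Sbar\}$ and $\{V\subgroup\Sbar\}\to\{V\}$ are free transitive torsors under the target automorphism group, hence of sizes $|\Sbar|$ and $|\overline{O_{48}}|$ respectively. The main obstacle will be Step~2: because $\Sbar$ is a split extension, unlike $S$, there are a handful of extra elementary abelian and small dihedral subgroups of $\Sbar$ that need to be matched with $V$ up to $\Fcalbar$-conjugacy or else ruled out, and one has to be precise about the split versus non-split distinction when transferring the $\SU(2)$ bullet/centric/radical bookkeeping across the quotient.
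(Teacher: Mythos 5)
Your overall plan is the same as the paper's: classify the $\Fcalbar$-centric, $\Fcalbar$-radical subgroups of $\Sbar$ (obtaining $\Sbar$ and the Klein four group $V$), compute the normalizers $N_{\SO(3)}(\Sbar)=\Sbar$, $N_{\SO(3)}(V)=\overline{O_{48}}$ and $N_{\SO(3)}(\Sbar)\cap N_{\SO(3)}(V)=\overline{Q_{16}}$ by pushing the $\SU(2)$ data through the central quotient, and then assemble $s(\Tcalbar^{cr})$ formally as in Example~\ref{example:subdivisionofL}. But the step you defer as ``the main obstacle'' is exactly the crux of the paper's argument, and your sketch of it would not go through as written. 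Since $\iboldbar$ acts on $\Tbar$ by inversion and the extension splits, every element of $\Sbar\setminus\Tbar$ is an involution; hence there is no ``cyclic order-$4$ candidate'' outside $\Tbar$, and the subgroup you propose to kill by a fusion-induced inversion does not occur (cyclic subgroups of $\Tbar$ are already non-centric). The genuinely new candidate is the dihedral group $D_4$ of order $8$, the extension of $\langle\,\iboldbar\,\rangle$ by $\integers/4\subgroupeq\Tbar$. It is not $\Fcalbar$-conjugate to $V$ (wrong order) and is not eliminated by centricity; the paper rules it out by a radicality computation: $\Aut_{D_4}(D_4)$ has order $4$, the full automorphism group $\Aut(D_4)\cong D_4$ has order $8$ and is entirely realized by conjugation in $\SO(3)$, so $\Out_{\Fcalbar}(D_4)\cong\integers/2$ and $D_4$ is not $\Fcalbar$-radical. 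You should also say explicitly that the case $P\cap\Tbar=0$, i.e.\ $P=\langle\,\iboldbar\,\rangle$, fails centricity because it is centralized by the order-$2$ element of $\Tbar$, rather than folding it into ``direct inspection.''

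A second, smaller issue: your centricity argument for $V$ is not valid as stated. Centralizers do not pass to central quotients: the image of $C_{\SU(2)}(Q)=\{\pm I\}$ in $\SO(3)$ is trivial while $C_{\SO(3)}(V)=V$, so the claim that \eqref{eq: centralizer-center} ``forces the reverse inclusion after quotienting'' proves too much and would contradict $V\subgroupeq C_{\SO(3)}(V)$. Either verify $C_{\SO(3)}(V)=V$ by direct matrix computation, as the paper does, or argue that a lift $g$ of a centralizing element induces an automorphism of $Q$ sending each element to $\pm$ itself; such automorphisms are inner, so $g\in Q\cdot C_{\SU(2)}(Q)=Q$ and its image lies in $V$. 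With that repaired, your surjection $\Aut_{\Fcal}(Q)\twoheadrightarrow\Aut_{\Fcalbar}(V)$ with kernel $\Inn(Q)$ does give $\Out_{\Fcalbar}(V)\cong\Sigma_3$, and your normalizer computations (including identifying $\overline{Q_{16}}$ as dihedral of order $8$, which is correct, since no quaternion group embeds in $\SO(3)$) match the paper's, which obtains them more directly from the fact that quotienting by a central subgroup contained in $Q$ preserves normalizers.
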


\begin{proof}
The calculation is very similar to that of Section~\ref{section:2local}. First we need
the collection of $\Fcalbar$-centric, $\Fcalbar$-radical subgroups of $\Sbar$ and their normalizers.
As was the case for~$\SU(2)$ (Lemma~\ref{lemma: radical possibilities}), no proper subgroup of $\Tbar$ is $\Fcalbar$-centric, and $\Tbar$ itself is not $\Fcalbar$-radical.

Likewise an identical argument says that
if $P$ is a proper $\Fcalbar$-centric, $\Fcalbar$-radical subgroup of~$\Sbar$, then $P$ does not contain $\integers/8\subgroupeq \Tbar$.
In principle, instead of two possibilities remaining
(as in Lemma~\ref{lemma: radical possibilities}), there are three, because $P\,\cap\,\Tbar=0$ is now possible (for $P=\langle \ibold\rangle$). However, $\langle \ibold\rangle$ is not $\Fcalbar$-centric, since it is centralized by the element of order~$2$ in~$\Tbar$.

We are left with two possible extensions of $\langle \ibold\rangle$ by a subgroup of~$\Tbar$. The first is trivial, and the second is the dihedral group of symmetries of the square:
\[
1\longrightarrow \integers/2
 \longrightarrow V
 \longrightarrow \integers/2
 \longrightarrow 1,
\]
\[
1\longrightarrow \integers/4
 \longrightarrow D_{4}
 \longrightarrow \integers/2
 \longrightarrow 1.
\]
However, the inner automorphism group $\Aut_{D_{4}}(D_{4})\cong D_{4}/Z(D_{4})$ has order~$4$, while the full abstract automorphism group is isomorphic to~$D_{4}$ and has order~$8$.
Further, all abstract automorphisms of $D_{4}$ are realized by conjugations in~$\SO(3)$, as shown by thinking of $D_{4}$ as symmetries of a square. Therefore $\Out_{\Fcalbar}(D_{4})\cong\integers/2$, so
$D_{4}$ is not $\Fcalbar$-radical.

On the other hand, the abstract automorphism group of~$V$ is~$\Sigma_{3}$ and is also fully realized in~$\SO(3)$, so $V$ is $\Fcalbar$-radical. By direct computation with matrices, it is also easily verified that $C_{\SO(3)}(V)=V$, and so $V$ is indeed $\Fcalbar$-centric.

Note that $V$ is the image of $Q$ under the quotient map~$\SU(2)\rightarrow\SO(3)$. Further, taking a quotient group by the center preserves normalizers of subgroups that contain the center. That is, $N_{\SO(3)}(V)\cong N_{\SU(2)}(Q)/Z(\SU(2))=\overline{O_{48}}$. Likewise, $N_{\SO(3)}(\Sbar)=\Sbar$,
and $N_{\SO(3)}(\Sbar)\cap N_{\SO(3)}(V)=\overline{Q_{16}}$.
\end{proof}

For the statement of Theorem~\ref{theorem:DMWSO(3)}, observe that in fact the quotient $\overline{Q_{16}}$ is the quaternionic group~$Q_{8}$, and likewise the quotient $\overline{O_{48}}$ is the octahedral group~$O_{24}$ (which is isomorphic to~$\Sigma_{4}$).

\begin{theorem}\label{theorem:DMWSO(3)}
Let $\Sbar\subgroupeq  \SO(3)$ be the maximal discrete \twodash  toral subgroup $\ZpinfinitySpecific{2}\rtimes \integers/2$. Let $Y$ be the homotopy pushout
\[\xymatrix{
B\,Q_{8}
   \ar[d]\ar[r]
& B\Sbar
   \ar[d]\\
B\,O_{24}
   \ar[r]
& Y
}
\]
where the maps are induced by inclusions. Then
$\pcomplete{Y}\simeq \pcomplete{\BSO(3)}$.
\end{theorem}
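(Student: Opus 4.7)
The plan is to proceed in complete parallel with the proof of Theorem~\ref{theorem:DMWSU(2)}, using Corollary~\ref{corollary:T=BG} to replace the centric linking system by the centric-radical transporter category, then Proposition~\ref{proposition:subdivision} to pass to the subdivision category, then Proposition~\ref{proposition: SO(3) transporter category} to identify a skeletal span, and finally Thomason's theorem to realize the nerve of that span as a homotopy pushout.

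The first step is to verify the centralizer hypotheses required by Corollary~\ref{corollary:T=BG} for the two $\Fcalbar$-conjugacy classes of $\Fcalbar$-centric $\Fcalbar$-radical subgroups of $\Sbar$, namely $V$ and $\Sbar$ itself. For $V \cong \integers/2 \times \integers/2$, the equality $C_{\SO(3)}(V) = V = Z(V)$ is observed during the proof of Proposition~\ref{proposition: SO(3) transporter category}, and $V$ is a finite $2$-group. For $\Sbar = \ZpinfinitySpecific{2} \rtimes \integers/2$, any element of $\SO(3)$ centralizing $\Sbar$ must centralize the identity component $\Tbar$, hence lies in $\Tbar$, and must also be fixed by the inversion action of $\iboldbar$; this pins it to the two-torsion of $\Tbar$, which is finite and equals $Z(\Sbar)$. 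Thus Corollary~\ref{corollary:T=BG} applies and gives $\pcomplete{\realize{\Tcalbar^{cr}}} \simeq \pcomplete{\BSO(3)}$, while Proposition~\ref{proposition:subdivision} yields $\realize{\Tcalbar^{cr}} \simeq \realize{s(\Tcalbar^{cr})}$.

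For the final step, let $\sbar(\Tcalbar^{cr})$ denote the poset of isomorphism classes of objects in $s(\Tcalbar^{cr})$, which by Proposition~\ref{proposition: SO(3) transporter category} is the span $\{\Sbar\} \leftarrow \{V \subgroupeq \Sbar\} \rightarrow \{V\}$. I would define a functor $\Bcal \colon \sbar(\Tcalbar^{cr}) \rightarrow \Cat$ by $\{\Sbar\} \mapsto \Bcal\Sbar$, $\{V \subgroupeq \Sbar\} \mapsto \Bcal Q_{8}$ (using the identification $\overline{Q_{16}} \cong Q_{8}$ noted before the statement), and $\{V\} \mapsto \Bcal O_{24}$ (using $\overline{O_{48}} \cong O_{24}$), with functors induced by inclusions of subgroups. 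The Grothendieck construction $\Gr(\Bcal)$ is isomorphic to the skeletal category appearing in Proposition~\ref{proposition: SO(3) transporter category}, so Thomason's theorem \cite[2.9]{Dwyer-Homology} identifies its nerve with the homotopy colimit of $\Bcal$, which is exactly the homotopy pushout defining $Y$. Assembling the chain of equivalences and $2$-completing yields $\pcomplete{Y} \simeq \pcomplete{\BSO(3)}$. I do not expect a genuine new obstacle: all the combinatorial work was done in Proposition~\ref{proposition: SO(3) transporter category}, and the one structural difference from the $\SU(2)$ case (that $\Sbar$ is a split extension, whereas $S$ is not) affects only the isomorphism type of the individual vertex groups and not the shape of the indexing poset.
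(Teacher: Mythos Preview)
Your proposal is correct and follows essentially the same approach as the paper's own proof: verify the centralizer hypotheses of Corollary~\ref{corollary:T=BG} for $V$ and~$\Sbar$, then invoke Proposition~\ref{proposition: SO(3) transporter category} and repeat the Thomason/Grothendieck argument from Theorem~\ref{theorem:DMWSU(2)}. Your centralizer computation for $\Sbar$ is slightly more explicit than the paper's (which just says the centralizer of $\Sbar$ is ``necessarily smaller'' than that of~$V$ and identifies it as $\integers/2\subgroupeq\Tbar$), but the content is the same.
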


\begin{proof}
Exactly the argument of Theorem~\ref{theorem:DMWSU(2)} applies
(using Proposition~\ref{proposition: SO(3) transporter category}) once we check that
Corollary~\ref{corollary:T=BG} applies in this case as well.
We need to verify that for all $\Fcalbar$-centric $\Fcalbar$-radical subgroups $P\subgroupeq\Sbar$, we know that $C_{\SO(3)}(P)$ is a finite \twodash group and also that $C_{\SO(3)}(P)=Z(P)$. The only cases to verify are $P=\Sbar$ and $P=V$.
We noted in the proof of Proposition~\ref{proposition: SO(3) transporter category} that $C_{\SO(3)}(V)=V$, a finite \twodash group. The $\SO(3)$-centralizer of $\Sbar$ is necessarily smaller, and in fact only $\integers/2\subgroupeq \Tbar$ centralizes~$\Sbar$, so it meets the condition as well. We conclude that Corollary~\ref{corollary:T=BG} allows us to use the transporter category instead of the linking system, and the theorem now follows from Proposition~\ref{proposition: SO(3) transporter category} in exactly the same way that Theorem~\ref{theorem:DMWSU(2)} follows from~\eqref{eq: SU EI system}.
\end{proof}

\bibliographystyle{amsalpha}

\newcommand{\etalchar}[1]{$^{#1}$}
\providecommand{\bysame}{\leavevmode\hbox to3em{\hrulefill}\thinspace}
\providecommand{\MR}{\relax\ifhmode\unskip\space\fi MR }
\providecommand{\MRhref}[2]{%
  \href{http://www.ams.org/mathscinet-getitem?mr=#1}{#2}
}
\providecommand{\href}[2]{#2}

\end{document}